\newenvironment{keywords}{\paragraph{Keywords:}}{}
\newenvironment{acknowledgements}{\paragraph{Acknowledgements:}}{}
\def\mathscr{\EuScript} 
\renewcommand{\va}[1]{\vaC{#1}}
\renewcommand{\opt}{^{\sharp}}        
\newcommand{\tini}{t_{i}}
\newcommand{\tfin}{t_{f}}
\newcommand{\tzero}{t_{0}}
\newcommand{\Truncat}{\mathfrak{T}}
\newcommand\Pbvalue[2]{{\cal V}_{\cal #1}\nc{\va{#2}}}
\newcommand\Pbvalueter[2]{{\cal V}_{{\cal #1}_i}\nc{#2}}
\newcommand\sumi[1]{\sum_{i=0}^{N'} #1 \findi{w'_i}(\va{w}')}
\newcommand\pisumi[1]{\sum_{i=0}^{N'} \pi'_i #1}
\newcommand\sumj[1]{\sum_{j=0}^{N''} #1 \findi{w''_j}(\va{w}'')}
\newcommand\pisumj[1]{\sum_{j=0}^{N''} \pi''_j #1}
\newcommand\sumij[1]{\sum_{i=0}^{N'}\sum_{j=0}^{N''} #1 \findi{w'_i}(\va{w}')\findi{w''_j}(\va{w}'')}
\newcommand{\figscale}{0.40}
\title{Time Consistency \\
       for Multistage Stochastic Optimization Problems \\
       under Constraints in Expectation}
\author{%
  Pierre Carpentier\footnote{UMA, ENSTA Paris, Institut Polytechnique de Paris, Palaiseau, France},
  \and
  Jean-Philippe Chancelier\footnote{CERMICS, \'Ecole des Ponts, Marne-la-Vall\'ee, France},
  \and
  Michel De Lara\footnotemark[2]
}
\date{\today}
\begin{document}
\maketitle

\begin{abstract}
  We consider sequences ---  indexed by time (discrete stages) ---
  of families of multistage stochastic optimization problems.
  At each time, the optimization problems in a family are
  parameterized by some quantities (initial states, constraint
  levels\ldots). In this framework, we introduce an adapted
  notion of time consistent optimal solutions, that is,
  solutions that remain optimal after truncation of the past
  and that are optimal for any values
  of the parameters. We link this time consistency notion
  with the concept of state variable in Markov Decision
  Processes for a class of multistage stochastic optimization
  problems incorporating state constraints at the final
  time, either formulated in expectation or in probability.
  For such problems, when the primitive noise random process is stagewise
  independent and
  takes a finite number of values, we show that time consistent
  solutions can be obtained by considering a finite dimensional
  state variable. We illustrate our results on a simple dam
  management problem.
\end{abstract}

\begin{keywords}
  Multistage Stochastic Optimization; Time Consistency;
  Constraints in Expectation; Dynamic Programming

\end{keywords}


\setcounter{tocdepth}{2}


\section{Introduction and motivation}
\label{sect:introduction}

The notion of time consistency has been introduced in the field
of Economics \cite{Hammond:1989}, and developed in the context of
risk measures \cite{Artzner_AOR_2007,Riedel_SPA_2004,Detlefsen_FS_2005,
  Cheridito_EJP_2006}. It has been studied in stochastic optimization,
both from the stochastic programming \cite{Shapiro_ORL_2009,Pflug2016}
and from  the Markov Decision Process  \cite{Ruszczynski_OO_2009}
points of view.
Loosely speaking, time
consistency means that strategies obtained by solving the problem at the
very first stage do not have to be questioned later on. This definition
has been used in~\cite{Carpentier-Chancelier-Cohen-DeLara-Girardeau:2012}
to establish links between the concept of state variable and the notion
of time consistency.
The aim in~\cite{Carpentier-Chancelier-Cohen-DeLara-Girardeau:2012}
was to highlight the role of information in time consistency. For example,
considering a standard multistage stochastic optimization problem
solvable by dynamic programming, it was shown that adding a probabilistic
constraint involving the state at the final instant of the time span
invalidates the time inconsistency property, in the sense that optimal
strategies based on the usual state variable have to be reconsidered
at each time stage. It was also shown that it was possible to devise an
appropriate state variable, namely the probabilistic distribution
of the state variable rather than the state variable itself, to
formulate an equivalent problem enjoying the time consistency
property. But this state is an infinite dimensional one, so that
dynamic programming is usually not implementable.
The aim of this article is to give deeper insights into the
results established in~\cite{Carpentier-Chancelier-Cohen-DeLara-Girardeau:2012}
and to show that it is possible to regain time consistency on such problems
by using an extended finite dimensional state variable.

The paper is organized as follows.
In Sect.~\ref{sect:uctc-soc},
we introduce the notion of universal solution for a family
of optimization problems, and we define the notion of time
consistency for a sequence of families of optimization
problems. Then, we revisit the setting of a discrete time
multistage stochastic optimization problem in the standard formulation,
and we show that our definition of time consistency applies
in this case.
In Sect.~\ref{sec:constraint}, we add an expectation
constraint on the final state to the standard multistage stochastic
optimization problem, and we define families of optimization
problems parameterized by both the initial state and the level
of constraint. We prove that the feedback strategies --- obtained by
dynamic programming on an extended problem formulation with
additional state and control variables --- are time consistent.
In Sect.~\ref{sect:numer},
we present a toy problem for managing a dam subject to a final
constraint in probability, and we give results obtained using
the extended formulation. Finally, we draw some conclusions
in Sect.~\ref{sect:conclusion}.

\section{Time consistency and multistage stochastic optimization}
\label{sect:uctc-soc}

In \S\ref{ssect:uctc}, we introduce the notion of universal
solution of a family of optimization problems and the notion
of time consistency of a sequence of controls for
an optimization data set. In \S\ref{ssec:classic}, we show how
these two notions apply in multistage stochastic optimization.

\subsection{Universal solutions and time consistency}
\label{ssect:uctc}

We start with general considerations on universal solutions and time
consistency,
before moving to more formal statements.

In optimization, the most natural notion of \emph{universal solution}
is the following. Let~$\espace{A}$ be a set of parameters
and~$\CONTROL$ be a set of (decision) variables.
Let \( \sequence{\Criterion^{a}}{a\in\espacea{A}} \) be a family
of functions\footnote{%
  {Adopting usage in mathematics, we follow Serge Lang and use ``function'' only to
    refer to mappings in which the codomain is numerical --- that is, a set of numbers (i.e. a subset
    of~$\RR$ or $\CC$, or their possible extensions with $\pm \infty$)
    --- and reserve the term ``mapping'' for more general codomains.}
  \label{ft:mapping_vs_function}}  \( \Criterion^{a} : \CONTROL \to \RR\cup\na{+\infty} \)
indexed by the parameter~$a$. An element \( \control\opt\in\CONTROL \)
is a \emph{universal solution} for the family of functions
\( \sequence{\Criterion^{a}}{a\in\espacea{A}} \) when
\( \control\opt\in\bigcap_{a\in\espacea{A}}\argmin_{\control\in\CONTROL}
\Criterion^{a}\np{\control} \).
In a different way, the most natural
notion of \emph{time consistency} in (mutistage) optimization is the following.
Let \( \tini < \tfin \) be two integers,
$\ic{\tini,\tfin}=\na{\tini,\tini{+}1,\dots,\tfin{-}1,\tfin}$ be the
corresponding finite time span, and $\sequence{\CONTROL_{t}}{t\in\ic{\tini,\tfin}}$
be a sequence of control sets. 
We introduce the \emph{truncation mapping} (projection) at time~$t \in\ic{\tini,\tfin}$, that is,
\( \Truncat_t : \CONTROL_{\tini}\times\dots\times\CONTROL_{\tfin}
   \to \CONTROL_{t}\times\dots\times\CONTROL_{\tfin} \) and
\( \Truncat_t (\control_{\tini},\ldots,\control_{\tfin}) =
(\control_{t},\ldots,\control_{\tfin}) \).
Then, considering
a sequence of functions $\sequence{\Criterion_{t}}{t\in\ic{\tini,\tfin}}$, with
\( \Criterion_{t} : \CONTROL_{t}\times\dots\times\CONTROL_{\tfin}
\to \RR\cup\na{+\infty} \), we say that \emph{time consistency} holds when,
for all~$t \in \ic{\tini,\tfin}$,
\begin{equation*}
  \Truncat_t \bp{ \argmin_{\np{\control_{\tini},\ldots,\control_{\tfin}} \in
    \CONTROL_{\tini}\times\dots\times\CONTROL_{\tfin} }
  \Criterion_{\tini}\np{\control_{\tini},\ldots,\control_{\tfin}} }
\subset \argmin_{\np{\control_{t},\ldots,\control_{\tfin}} \in
    \CONTROL_{t}\times\dots\times\CONTROL_{\tfin} }
  \Criterion_{t}\np{\control_{t},\ldots,\control_{\tfin}}
  \eqfinp
\end{equation*}
We now extend and mix these two notions in the case where the (cost)
functions depend on both parameters and time.

\begin{definition}
\label{def:optimization_data_set}
We call \emph{optimization data set} a family
$\mathscr{D} =\big( \mathcal{T}$, $\na{\espacea{A}_{t}}_{t\in\mathcal{T}}$,
$\na{\CONTROL_{t}}_{t\in\mathcal{T}}$,
$\na{J_{t}}_{t\in\mathcal{T}}\big)$, where
$\mathcal{T}=\ic{\tini,\tfin}$ (with  \( \tini < \tfin \) two integers),
a sequence~$\na{\espacea{A}_{t}}_{t\in\mathcal{T}}$ of parameter sets,
a sequence~$\na{\CONTROL_{t}}_{t\in\mathcal{T}}$ of control sets,
a sequence~$\na{J_{t}}_{t\in\mathcal{T}}$ of cost functions,
with \(   J_{t} :   \espacea{A}_{t}\times\espacea{U}_{t}\times\dots\times\CONTROL_{\tfin} \to  \mathbb{R}\cup\{+\infty\} \).

For any $t \in \mathcal{T}$, we call \emph{truncated optimization
data set at time~$t$} the optimization data set
\( \mathscr{D}_{t} =\bp{\ic{t,\tfin},\na{\espacea{A}_{s}}_{s\in\ic{t,\tfin}},
  \na{\CONTROL_{s}}_{s\in\ic{t,\tfin}},\na{J_{s}}_{s\in\ic{t,\tfin}}} \).
\end{definition}



The notion of universal solution for an optimization data set
at time $t \in \mathcal{T}$ is the following.

\begin{definition}
  \label{def:universalsolution}
  Let~${\mathscr{D}}
      $
      be an optimization data set and let $t \in \mathcal{T}$ be given.
      We say that $(u_{t}\opt,\dots,u_{\tfin}\opt)\in\CONTROL_{t}\times\dots\times\CONTROL_{\tfin}$
  is a \emph{universal solution for the data set~${\mathscr{D}}$ at time~$t$}
  if it satisfies
  \begin{equation}
    (u_{t}\opt,\dots,u_{\tfin}\opt) \in
       \bigcap_{a_{t}\in\espacea{A}_{t}} \argmin  \ba{\mathscr{P}_{t}^{\mathscr{D}}(a_{t})}
    \eqfinv
  \end{equation}
  where, for any $a_t\in\espacea{A}_{t}$, the optimization problem~$\mathscr{P}_{t}^{\mathscr{D}}(a_{t})$
  is defined by
  \begin{equation}
    \label{pb:general-Jt}
    \min_{(u_{t},\dots,u_{\tfin})\in
          \CONTROL_{t}\times\dots\times\CONTROL_{\tfin}} \;
    J_{t} (a_{t},u_{t},\dots,u_{\tfin}) \eqfinp
  \end{equation}
\end{definition}


The property of \emph{time consistency} of a
sequence~$(u_{\tini}\opt,\dots,u_{\tfin}\opt)$ of controls
for an optimization data set~$\mathscr{D}$
is defined as follows.

\begin{definition}
  \label{def:timeconsistency}
  Let~$\mathscr{D}$ be an optimization data set and let
  $(u_{\tini}\opt,\dots,u_{\tfin}\opt)$ be a sequence of controls
  in $\CONTROL_{\tini}\times\dots\times\CONTROL_{\tfin}$.
  We say that the sequence~$(u_{\tini}\opt,\dots,u_{\tfin}\opt)$ of controls
  is \emph{time consistent for the optimization data set~$\mathscr{D}$}
  if, for any~$t\in\mathcal{T}$, the truncated subsequence
  $(u_{t}\opt,\dots,u_{\tfin}\opt)=\Truncat_{t}(u_{\tini}\opt,\dots,u_{\tfin}\opt)$
  of controls
  is a universal solution for the optimization data set~${\mathscr{D}}$ at time~$t$.
\end{definition}
Otherwise stated, given a universal solution
$(u_{\tini}\opt,\dots,u_{\tfin}\opt)$ for the data set~${\mathscr{D}}$
at initial time~$\tini$, time consistency means that the subsequence
$(u_{t}\opt,\dots,u_{\tfin}\opt)$ is a universal solution
for the data set~${\mathscr{D}}$ at time~$t$, for any time~$t>\tini$.

\subsection{Multistage stochastic optimization in the classical case}
\label{ssec:classic}

We study the standard case of a controlled dynamical system
influenced by exogenous disturbances. The decision
maker has to find strategies to drive the system so as to
minimize some objective function over a certain time span.


Let be given a probability space~$(\omeg, \trib, \prbt)$.
All random variables and all random processes are defined
on~$(\omeg, \trib, \prbt)$, and we denote them using bold letters.
We denote by $\sigma\np{\va{z}} \subset \trib$ the~$\sigma$-field
generated by a random variable~$\va{z}$.

We consider a positive integer $T>0$ and the finite time
span~$\ic{0,T}=\{0$, $1$, $\dots$,$T{-}1$, $T\}$. We denote by
$\va{w} = \na{\va{w}_t}_{t=1, \dots, T}$ the primitive (or exogenous)
\emph{noise random process}, where each random variable~$\va{w}_t$
takes values in a measurable space~$\espacea{W}_t$.
We denote by $\va{u}=\na{\va{u}_t}_{t=0,\dots,T{-}1}$ the
\emph{control random process}, where each random variable~$\va{u}_t$
takes values in a measurable space~$\espacea{U}_t$, and by
$\va{x}=\na{\va{x}_t}_{t=0,\dots,T}$ the
\emph{state random process}, where each random variable~$\va{x}_t$
takes values in a measurable space~$\espacea{X}_t$.
We consider a sequence \( \na{f_t}_{t=0,\dots,T{-}1} \) of
measurable mappings~$f_t : \espacea{X}_t\times\espacea{U}_t\times\espacea{W}_{t+1}
\rightarrow \espacea{X}_{t+1}$ (dynamics),
a sequence \( \na{L_t}_{t=0,\dots,T{-}1} \) of measurable functions
  $L_t : \espacea{X}_t\times\espacea{U}_t\times\espacea{W}_{t+1}
  \rightarrow \RR_+ \cup\na{+\infty}$ (instantaneous cost),
  and a measurable function
  $K : \espacea{X}_{T+1} \rightarrow \RR_+ \cup\na{+\infty}$ (final cost).

The optimization problem we consider below consists in minimizing
the expectation of a sum of costs depending on the state, the control
and the noise variables over the finite time span~$\ic{0,T}$.
The state variable evolves with respect to the dynamics~$f_t$ that
depends on the current state, noise and control values.
The problem starting at time~$t=0$ is
\begin{subequations}
  \label{pb:classic}
  \begin{align}
    \min_{\va{u},\va{x}} \quad
    & \bgesp{\sum_{t=0}^{T{-}1}
      L_t \np{\va{x}_t,\va{u}_t,\va{w}_{t+1}} + K \np{\va{x}_T}}
      \eqfinv \label{pb:classic-crit} \\
    \text{s.t.} \quad
    & \va{x}_{0} = x_{0} \eqfinv \label{pb:classic-ini} \\
    & \va{x}_{t+1} = f_t \np{\va{x}_t,\va{u}_t,\va{w}_{t+1}} \eqsepv
      \qquad \forall t=0, \dots, T{-}1 \eqfinv  \label{pb:classic-dyn}\\
    & \sigma\np{\va{u}_t} \subset
      \sigma\np{
      \va{w}_{1},\dots,\va{w}_t} \eqsepv
      \qquad\; \forall t = 0, \dots, T{-}1
      \eqfinp
      \label{pb:classic-mes}
  \end{align}
\end{subequations}
  By convention, for $t=0$, the $\sigma$-field
  $\sigma\np{\va{w}_{1},\dots,\va{w}_t}$ is the trivial $\sigma$-field
  $\na{\emptyset, \omeg}$.
Under the measurability assumptions made,
Problem~\eqref{pb:classic} is well-defined
as all functions take extended nonnegative values.

We make the following assumption.
\begin{assumption}[Markovian setting]
  \label{hyp:Markov}
  The noise random variables~$\va{w}_{1}, \dots, \va{w}_T$
  are independent.
\end{assumption}
Using Assumption~\ref{hyp:Markov}, it is well
known~\cite{Bertsekas_AS_2005} that
there is no loss of optimality in looking for the optimal
control~$\va{u}_t$ at time~$t$ of Problem~\eqref{pb:classic}
as a feedback strategy depending
on the state variable~$\va{x}_t$, that is, as a measurable
mapping~$\phi_{t}: \espacea{X}_t \rightarrow \espacea{U}_t$ (state feedback).

Let us embed Problem~\eqref{pb:classic} in the framework
developed in \S\ref{ssect:uctc}. For that purpose, we build an optimization data set
\begin{equation}
  \label{eq:dataset-classic}
  \mathscr{S} =\bp{\mathcal{T},
    \na{\espacea{X}_{t}}_{t\in\mathcal{T}},
    \na{\espacef{U}_{t}}_{t\in\mathcal{T}},
    \na{J_{t}}_{t\in\mathcal{T}}}\eqfinp
\end{equation}
The discrete time
span~$\mathcal{T}$ is $\ic{0,T{-}1}$,
the sequence of parameter sets is
$\na{\espacea{X}_{t}}_{t\in\mathcal{T}}$, and the sequence
of control spaces is~$\na{\espacef{U}_{t}}_{t\in\mathcal{T}}$,
$\espacef{U}_{t}$ being
the space of measurable mappings
$\phi_{t}: \espacea{X}_t \rightarrow \espacea{U}_t$
(state feedbacks).
The sequence~$\na{J_{t}}_{t\in\mathcal{T}}$ of cost functions
\begin{align*}
  J_{t} \; : \;
  & \espacea{X}_{t}\times\espacef{U}_{t}\times\dots\times\espacef{U}_{T{-}1}
    \; \longrightarrow \; \mathbb{R}\cup\{+\infty\} \\
  & \quad (x_{t},\phi_{t},\dots,\phi_{T{-}1}) \quad \longmapsto\;
    J_{t} (x_{t},\phi_{t},\dots,\phi_{T{-}1}) \eqfinv
\end{align*}
is defined by
\begin{align*}
  J_{t} (x_{t},\phi_{t},\dots,\phi_{T{-}1}) = \;
  & \bgesp{\sum_{\tau=t}^{T{-}1}
    L_\tau \bp{\va{x}_\tau,\phi_{\tau}\np{\va{x}_\tau},\va{w}_{\tau+1}} +
    K \np{\va{x}_T}} \eqfinv \\
  \text{with} \quad
  & \va{x}_{t} = x_{t} \eqfinv \\
  & \va{x}_{\tau+1} =
    f_\tau \bp{\va{x}_\tau,\phi_{\tau}\np{\va{x}_\tau},\va{w}_{\tau+1}}
   \eqsepv \forall \tau \in \ic{t,T{-}1} \eqfinp
\end{align*}

Thanks to dynamic programming, we obtain the following result.
The sequence $\np{\phi_{0}\opt, \dots, \phi_{T{-}1}\opt}$ of optimal strategies
of Problem~\eqref{pb:classic}, obtained by solving the dynamic
programming equation backward in time
\begin{subequations}
  \label{eq:classicBellman}
  \begin{align}
    V_{T}(x)
    & = K(x) \eqfinv \\
    V_{t}(x)
    & = \min_{u\in\espacea{U}_{t}} \Besp{L_{t}(x,u,\va{w}_{t+1}) +
      V_{t+1}\bp{f_{t}(x,u,\va{w}_{t+1})}} \eqfinv
    \intertext{with}
    \phi_{t}\opt(x)
    & \in \argmin_{u\in\espacea{U}_{t}} \Besp{L_{t}(x,u,\va{w}_{t+1}) +
      V_{t+1}\bp{f_{t}(x,u,\va{w}_{t+1})}} \eqfinv
  \end{align}
\end{subequations}
is time consistent, in the sense of Definition~\ref{def:timeconsistency}, for the
optimization data set~${\cal S}$ defined in Equation~\eqref{eq:dataset-classic}.
Indeed, letting time~$t \in \mathcal{T}$ be given, we build from the data set~${\cal S}$
the family
$\mathscr{P}_{t}^{\mathscr{S}}=
\ba{\mathscr{P}_{t}^{\mathscr{S}}(x_{t})}_{x_{t}\in\espacea{X}_{t}}$
of optimization problems as in Equation~\eqref{pb:general-Jt}, with
Problem~$\mathscr{P}_{t}^{\mathscr{S}}(x_{t})$ being
  \begin{equation}
    \label{pb:classic-Jt}
    \min_{(\phi_{t},\dots,\phi_{T{-}1})\in
      \espacef{U}_{t}\times\dots\times\espacef{U}_{T{-}1}} \;
    J_{t} (x_{t},\phi_{t},\dots,\phi_{T{-}1}) \eqfinp
  \end{equation}
  It is clear that Problem~$\mathscr{P}_{0}^{\mathscr{S}}(x_{0})$
  coincides with Problem~\eqref{pb:classic}.
  From the Bellman theory, we know that the sequence
  $\np{\phi_{0}\opt, \dots, \phi_{T{-}1}\opt}$ of strategies obtained by solving
  the dynamic programming equation~\eqref{eq:classicBellman}
  is such that, for any~$t \in \mathcal{T}$, the truncated sequence
  $\np{\phi_{t}\opt, \dots, \phi_{T{-}1}\opt}$ is an optimal solution
  of Problem~\eqref{pb:classic-Jt} for any initial state~$x_{t}$.
  Thus, according to Definition~\ref{def:timeconsistency}, the sequence
$\np{\phi_{0}\opt, \dots, \phi_{T{-}1}\opt}$ of controls is time
  consistent for the optimization data set~$\mathscr{S}$.

\begin{remark}
\label{rem:tc}
  The notion of time consistency crucially depends on the nature
  of the solutions of the family of optimization
  problem under consideration. As a matter of fact, consider
  Problem~\eqref{pb:classic} and
  its solution~$(\phi_{0}\opt, \dots, \phi_{T{-}1}\opt)$ in terms
  of feedback strategies: as already explained, there is no difficulty
  to apply the truncated sequence~$(\phi_{t}\opt, \dots, \phi_{T{-}1}\opt)$
  to Problem~\eqref{pb:classic-Jt} since this truncated sequence is admissible
  for the problem starting at time~$t$. But consider again Problem~\eqref{pb:classic}
  and its solution~$\va{u}\opt=(\va{u}\opt_{0},\ldots,\va{u}\opt_{T{-}1})$
  in terms of random variables. Problem~\eqref{pb:classic-Jt} is equivalent to
  \begin{subequations}
  \label{pb:classic-t}
  \begin{align}
    \min_{\va{u},\va{x}} \quad
    & \bgesp{\sum_{\tau=t}^{T{-}1}
      L_\tau \np{\va{x}_\tau,\va{u}_\tau,\va{w}_{\tau+1}} + K \np{\va{x}_T}}
      \eqfinv \\
    \text{s.t.} \quad
    & \va{x}_{t} = x_{t} \eqfinv \\
    & \va{x}_{\tau+1} = f_\tau \np{\va{x}_\tau,\va{u}_\tau,\va{w}_{\tau+1}} \eqsepv
      \qquad \forall \tau \in \ic{t,T{-}1} \eqfinv \\
    & \sigma\np{\va{u}_\tau} \subset
      \sigma\np{
      \va{w}_{t+1},\dots,\va{w}_\tau} \eqsepv
      \qquad \forall \tau \in \ic{t,T{-}1}
      \label{pb:classic-t-mes} \eqfinp
  \end{align}
  \end{subequations}
  We note that the truncated subsequence~$(\va{u}\opt_{t},\ldots,\va{u}\opt_{T{-}1})$
  of~$\va{u}\opt$  is not even admissible for
  Problem~\ref{pb:classic-t} as it does not satisfy~\eqref{pb:classic-t-mes}.
  Indeed, each~$\va{u}\opt_{\tau}$ for $\tau \in \ic{t,T{-}1}$ is by construction
  (see Constraint~\eqref{pb:classic-mes}) measurable with respect
  to the~$\sigma$-field $\sigma(\va{w}_{1}$, $\ldots$, $\va{w}_{\tau})$ and thus
  does not satisfy Constraint~\eqref{pb:classic-t-mes}.
\end{remark}

\section{Multistage stochastic optimization with a final constraint in expectation}
\label{sec:constraint}

In \S\ref{ssec:constraint-standard}, we modify the framework studied in
\S\ref{ssec:classic} by adding to Problem~\eqref{pb:classic} a constraint
in expectation involving the final state~$\va{x}_{T}$,
which leads to the optimization problem~\eqref{pb:constraint} below.
In \S\ref{ssec:constraint-equivalent}, we propose a reformulation
of Problem~\eqref{pb:constraint} involving a finite dimensional
state, and an optimization data set (including
the initial state and the level of the expectation constraint)
for which time consistency holds.
In \S\ref{ssec:constraint-dual}, we propose a dual problem formulation
of Problem~\eqref{pb:constraint}, and we illustrate the fact that such
a reformulation is not time consistent.

\subsection{Standard formulation}
\label{ssec:constraint-standard}

We use the notations defined in~\S\ref{ssec:classic}. We consider
a measurable function \( g : \espacea{X}_{T} \to \mathbb{R}_+^{m} \),
For convenience, we denote~$\espacea{Z}_{T}=\mathbb{R}^{m}$.
The stochastic optimization problem starting
at time~$\tzero \in \ic{0,T{-}1} $ with a final constraint in expectation
at time~$T$ is
\begin{subequations}
  \label{pb:constraint}
  \begin{align}
    \min_{\va{u},\va{x}} \quad
    & \bgesp{\sum_{t=\tzero}^{T{-}1}
      L_t \bp{\va{x}_t,\va{u}_t,\va{w}_{t+1}} + K \np{\va{x}_T}}
      \eqfinv \label{pb:constraint-crit} \\
    \text{s.t.} \quad
    & \va{x}_{\tzero} = x_{\tzero} \eqfinv \label{pb:constraint-ini} \\
    & \va{x}_{t+1} = f_t \bp{\va{x}_t,\va{u}_t,\va{w}_{t+1}} \eqsepv
      \qquad\quad\; \forall t\in \ic{\tzero,T{-}1}
      \eqfinv \label{pb:constraint-dyn} \\
    & \sigma\bp{\va{u}_t} \subset
      \sigma\bp{\va{w}_{\tzero{+}1},\dots,\va{w}_t} \eqsepv
      \qquad\; \forall t \in \ic{\tzero,T{-}1}
      \eqfinv \label{pb:constraint-mes} \\
    & \besp{g(\va{x}_{T})} -  b_{\tzero}\leq 0
      \eqfinv
      \label{pb:constraint-exp}
  \end{align}
\end{subequations}
with~$b_{\tzero}\in\mathbb{R}^{m}$. Again, Problem~\eqref{pb:constraint}
is assumed to be well-defined.

Even under the Markovian Assumption~\ref{hyp:Markov}, the presence
of Constraint~\eqref{pb:constraint-exp} makes it difficult
to write a dynamic programming equation for solving
Problem~\eqref{pb:constraint} starting at time~$\tzero=0$.
Indeed Constraint~\eqref{pb:constraint-exp} is not a pointwise
constraint at the final stage~$T$, so that we do not know how
to incorporate it easily in the dynamic programming equation.
In~\S\ref{ssec:constraint-dual}, using an indirect way of proceeding,
we show that it is possible to obtain  an optimal solution
of Problem~\eqref{pb:constraint} starting at time~$\tzero=0$
in terms of feedback strategies depending on the state~$\va{x}_{t}$.
But these feedbacks are implicitly parameterized by both the initial
state~$x_{0}$ and the constraint level~$b_{0}$, so that
they do not satisfy the time consistency property for
a data set in which the parameter at time~$t$ is the initial
state~$x_{t}$ (see~\S\ref{ssec:constraint-dual} for further details).

A partial answer to the question of time consistency
of the solution of Problem~\eqref{pb:constraint} starting
at time~$t_{0}=0$ has been given
in~\cite{Carpentier-Chancelier-Cohen-DeLara-Girardeau:2012}.
Indeed, as detailed in~\cite{Carpentier-Chancelier-Cohen-DeLara-Girardeau:2012},
Problem~\eqref{pb:constraint} can be written in an equivalent way
as a deterministic distributed optimal control problem in
which the state variable is the probability distribution of~$\va{x}_{t}$,
the dynamics of which is given by the Fokker-Planck equation.
This deterministic problem can be solved
by dynamic programming, which thus produces a sequence~$(\Psi_{0}\opt, \dots,
\Psi_{T{-}1}\opt)$ of strategies (with the mapping~$\Psi_{t}$ defined over
probability distributions on~$\espacea{X}_{t}$ and taking values
in~$\espacea{U}_{t}$ \cite{Bertsekas-Shreve:1996}),
which is time consistent for a data set in which
the parameter at time~$t$ is the initial state probability distribution.
But these optimal strategies depend on the specific
value~$b_{0}$ in the right-hand side of the expectation
constraint, and thus have to be recomputed if this value changes.
Moreover, the computation of the Bellman functions involves an infinite
dimensional state, so that it is generally not tractable.

Our goal is to obtain a solution for Problem~\eqref{pb:constraint}
starting at time~$\tzero=0$ which, on the one hand is computable
in practice (that is, involves a finite dimensional state),
and on the other hand is time consistent for a data set
(to be specified) in which the parameter at time~$t$ consists of
both the initial state~$x_{t}$ and the constraint level~$b_{t}$.
More precisely, we want to compute a solution for Problem~\eqref{pb:constraint}
starting at time~$\tzero=0$ which is optimal for
any value of both the initial state~$x_{0}$ in~\eqref{pb:constraint-ini}
and the final constraint level~$b_{0}$ in~\eqref{pb:constraint-exp}.
Moreover, for any~$t\in\ic{0,T{-}1}$, this solution after truncation has
to be a universal solution (Definition~\ref{def:universalsolution})
for the parameters~$(x_{t},b_{t})$
for Problem~\eqref{pb:constraint} starting at~$\tzero=t$.
As already explained in Remark~\ref{rem:tc}, time consistency is not
available for a solution in terms of random variables. We now present
a reformulation of Problem~\eqref{pb:constraint} involving a finite
dimensional state, whose solution in terms of state feedback strategies meets
the goal described in this paragraph.

\subsection{Formulation with martingale-type constraints}
\label{ssec:constraint-equivalent}

Following the same path as in~\cite{Bouchard_SIAM_2009},
but in a discrete time context, we show that
Problem~\eqref{pb:constraint} is equivalent to a multistage
stochastic optimization problem subject to an almost sure contraint
on the final state (see also \cite{Girardeau_These_2010,Granato_These_2012,Pfeiffer_AMO_2018}).
For that purpose, we introduce a new state process
$\va{z}=(\va{z}_{0},\ldots,\va{z}_{T})$ and a new control
process $\va{v}=(\va{v}_{0},\ldots,\va{v}_{T{-}1})$.
The random variables~$\va{z}_{t}$ and~$\va{v}_{t}$ take their values
respectively in spaces~$\espacea{V}_{t}$ and~$\espacea{Z}_{t}$,
all identical to the space~$\espacea{Z}_{T}=\mathbb{R}^{m}$
where~$\mathbb{R}^{m}$ is the codomain of the function~$g$
introduced at the beginning of~\S\ref{ssec:constraint-standard}:
$\espacea{V}_{t}=\espacea{Z}_{t}=\espacea{Z}_{T}=\mathbb{R}^{m}$
for all~$t\in\ic{0,T{-}1}$.
Now, we consider the optimization problem starting at time~$\tzero$
\begin{subequations}
  \label{pb:constraint-equiv}
  \begin{align}
    \min_{(\va{u},\va{v},\va{x},\va{z})} \;
    & \bgesp{\sum_{t=\tzero}^{T{-}1}
      L_{t}(\va{x}_{t},\va{u}_{t},\va{w}_{t+1})+K(\va{x}_{T})}
      \eqfinv \label{pb:constraint-equiv-cost}
      \intertext{subject, for all~$t=\tzero,\dots,T{-}1$, to dynamic constraints}
    & \va{x}_{\tzero} = x_{\tzero} \eqsepv \quad
      \va{x}_{t+1} = f_{t}(\va{x}_{t},\va{u}_{t},\va{w}_{t+1})
      \eqfinv \label{pb:constraint-equiv-dynx} \\
    & \va{z}_{\tzero} = z_{\tzero} \eqsepv \quad\;
      \va{z}_{t+1} = \va{z}_{t} + \va{v}_{t}
      \eqfinv \label{pb:constraint-equiv-dynz}
      \intertext{to measurability constraints}
    & \sigma\bp{\va{u}_{t}} \subset
      \sigma\bp{
      \va{w}_{\tzero{+}1},\dots,\va{w}_t}
      \eqfinv \label{pb:constraint-equiv-mesu} \\
    & \sigma\bp{\va{v}_{t}} \subset
      \sigma\bp{
      \va{w}_{\tzero{+}1},\dots,\va{w}_{t+1}}
      \eqfinv \label{pb:constraint-equiv-mesv}
      \intertext{to martingale-type constraints}
    & \va{v}_{t} \text{ is integrable and }
      \bespc{\va{v}_{t}}
      {\sigma\np{
      \va{w}_{\tzero{+}1},\dots,\va{w}_t}} = 0
      \eqfinv \label{pb:constraint-equiv-mart}
      \intertext{and to almost sure final constraint}
    & g(\va{x}_{T}) - \va{z}_{T} \leq 0
      \eqfinp \label{pb:constraint-equiv-finl}
  \end{align}
\end{subequations}
Note that, in this formulation, the control variable~$\va{u}_{t}$ taken at time~$t$ does not depend on
the noise~$\va{w}_{t+1}$ (\emph{Decision--Hazard} framework),
whereas the control variable~$\va{v}_{t}$, also taken
at time~$t$, does depend on the noise~$\va{w}_{t+1}$
(\emph{Hazard--Decision} framework). At time~$t$, the
martingale-type constraint~\eqref{pb:constraint-equiv-mart}
introduces a coupling between all the realizations
of the decision random variable~$\va{v}_{t}$. In the sequel,
for all $t \in \ic{\tzero{+}1,T}$, we denote by~$\tribu{F}_{\tzero:t}$
the~$\sigma$-field generated by the
sequence~$(\va{w}_{\tzero{+}1},\dots,\va{w}_t)$
of random variables: 
\begin{equation}
  \label{eq:sigma-field-t}
  \tribu{F}_{\tzero:t} =
  \sigma\bp{
    \va{w}_{\tzero{+}1},\dots,\va{w}_t}
  \eqfinp
\end{equation}
By convention, $\tribu{F}_{\tzero:\tzero} =\na{\emptyset, \omeg}$.

\subsubsection{Equivalence with the standard formulation}

The link between the multistage stochastic optimization control
problem~\eqref{pb:constraint-equiv} incorporating a martingale-type
constraint and the initial problem~\eqref{pb:constraint} is given
by the following proposition.

\begin{proposition}
\label{pr:equivalence}
We suppose that the data of the Problems~\eqref{pb:constraint}
and~\eqref{pb:constraint-equiv} are linked by
\begin{equation}
  \label{eq:data-compatibility}
  b_{\tzero} = z_{\tzero} \eqfinp
\end{equation}
Then, Problem~\eqref{pb:constraint} and Problem~\eqref{pb:constraint-equiv}
are equivalent, in the sense that
\begin{itemize}
  \item a solution
    $(\va{u}^{\flat},\va{v}^{\flat},\va{x}^{\flat},\va{z}^{\flat})$
    of Problem~\eqref{pb:constraint-equiv} can be deduced from a solution
    $(\va{u}\opt,\va{x}\opt)$ of Problem~\eqref{pb:constraint},
  \item the two first components $(\va{u}^{\flat},\va{x}^{\flat})$ of a solution
    $(\va{u}^{\flat},\va{v}^{\flat},\va{x}^{\flat},\va{z}^{\flat})$
    of Problem~\eqref{pb:constraint-equiv} is a solution
    of Problem~\eqref{pb:constraint}.
\end{itemize}
\end{proposition}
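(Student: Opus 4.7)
The plan is to prove both implications by explicitly constructing, from any admissible solution of one problem, an admissible solution of the other that yields the same value of the cost functional, so that optimality transfers in both directions. A useful preliminary observation is that the cost~\eqref{pb:constraint-equiv-cost} depends only on~$(\va{u},\va{x})$ and literally coincides with~\eqref{pb:constraint-crit}, so that only the feasible sets must be compared.

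For the direction corresponding to the second bullet, I would start from an admissible tuple $(\va{u}^{\flat},\va{v}^{\flat},\va{x}^{\flat},\va{z}^{\flat})$ of Problem~\eqref{pb:constraint-equiv} and show that $(\va{u}^{\flat},\va{x}^{\flat})$ satisfies all the constraints of Problem~\eqref{pb:constraint}. Iterating the recursion~\eqref{pb:constraint-equiv-dynz} yields $\va{z}^{\flat}_{T} = z_{\tzero} + \sum_{t=\tzero}^{T-1}\va{v}^{\flat}_{t}$. Taking expectations and applying the martingale-type condition~\eqref{pb:constraint-equiv-mart} via the tower property gives $\besp{\va{z}^{\flat}_{T}}=z_{\tzero}$. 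Combining this with the almost sure constraint~\eqref{pb:constraint-equiv-finl} and with the data compatibility~\eqref{eq:data-compatibility} produces $\besp{g(\va{x}^{\flat}_{T})}\leq z_{\tzero}=b_{\tzero}$, that is, exactly constraint~\eqref{pb:constraint-exp}. The remaining constraints~\eqref{pb:constraint-ini}--\eqref{pb:constraint-mes} are inherited verbatim from~\eqref{pb:constraint-equiv-dynx} and~\eqref{pb:constraint-equiv-mesu}.

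For the opposite direction (first bullet), given an admissible pair $(\va{u}\opt,\va{x}\opt)$ of Problem~\eqref{pb:constraint}, I would set the deterministic vector $c = z_{\tzero} - \besp{g(\va{x}\opt_{T})}$, which lies in~$\mathbb{R}_{+}^{m}$ componentwise thanks to~\eqref{pb:constraint-exp} and~\eqref{eq:data-compatibility}, and build a martingale terminating at $g(\va{x}\opt_{T})+c$. Concretely, put $\va{z}^{\flat}_{T}=g(\va{x}\opt_{T})+c$ and, for $t \in \ic{\tzero,T{-}1}$, define $\va{z}^{\flat}_{t}=\bespc{\va{z}^{\flat}_{T}}{\tribu{F}_{\tzero:t}}$ and $\va{v}^{\flat}_{t}=\va{z}^{\flat}_{t+1}-\va{z}^{\flat}_{t}$. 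Then $\va{z}^{\flat}_{\tzero}=z_{\tzero}$, the linear recursion~\eqref{pb:constraint-equiv-dynz} is built in, the measurability condition~\eqref{pb:constraint-equiv-mesv} follows from the definition of the conditional expectation, and the martingale-type condition~\eqref{pb:constraint-equiv-mart} is a direct consequence of the tower property. The almost sure constraint~\eqref{pb:constraint-equiv-finl} holds by the nonnegativity of $c$; hence $(\va{u}\opt,\va{v}^{\flat},\va{x}\opt,\va{z}^{\flat})$ is admissible for Problem~\eqref{pb:constraint-equiv}, with the same cost as $(\va{u}\opt,\va{x}\opt)$.

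With cost equality in hand in both directions, the conclusion that an optimum of one problem yields an optimum of the other --- i.e. the two bullet points --- follows at once. The main technical concern in this plan is integrability: the conditional expectations defining the $\va{z}^{\flat}_{t}$'s, as well as the integrability built into~\eqref{pb:constraint-equiv-mart}, both require $g(\va{x}\opt_{T})\in L^{1}$, which I would deduce from the standing well-posedness assumption of Problem~\eqref{pb:constraint}, as otherwise constraint~\eqref{pb:constraint-exp} itself would be vacuous. A secondary check is that the control $\va{u}\opt$ inherited from the standard problem fulfils the Decision--Hazard measurability~\eqref{pb:constraint-equiv-mesu}, which is immediate since~\eqref{pb:constraint-mes} and~\eqref{pb:constraint-equiv-mesu} are the same condition; in particular the differing Hazard--Decision measurability of $\va{v}^{\flat}$ causes no issue, being automatic in the Doob construction.
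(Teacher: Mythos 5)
Your proposal is correct and follows essentially the same route as the paper: both directions rest on the Doob-martingale construction $\va{z}_{t}=\bespc{g(\va{x}\opt_{T})}{\tribu{F}_{\tzero:t}}+c$ (the paper writes the same object via increments and a telescoping sum), and the transfer of optimality via the identity $\besp{\va{z}_{T}}=z_{\tzero}$ is the paper's argument as well, merely phrased as a cost-preserving correspondence between feasible sets rather than as two proofs by contradiction. The integrability point you flag is handled in the paper exactly as you suggest: $g\geq 0$ together with $\nesp{g(\va{x}\opt_{T})}\leq b_{\tzero}\in\mathbb{R}^{m}$ forces $g(\va{x}\opt_{T})\in L^{1}$.
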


\begin{proof}
  Let~$(\va{u}\opt,\va{x}\opt)$ be a solution of
  Problem~\eqref{pb:constraint}. We define the random
  processes~$\va{v}\opt$ and~$\va{z}\opt$ by
  \begin{subequations}
    \label{eq:construction-v}
    \begin{align}
      & \va{v}\opt_{t} = \bespc{g(\va{x}\opt_{T})}{\tribu{F}_{\tzero:t+1}} -
        \bespc{g(\va{x}\opt_{T})}{\tribu{F}_{\tzero:t}} \eqsepv
        \quad \forall t \in \ic{\tzero,T{-}1}
        \eqfinv \label{eq:construction-vv}\\
      & \va{z}\opt_{\tzero} = z_{\tzero} \eqsepv \quad
        \va{z}\opt_{t+1} = \va{z}\opt_{t} + \va{v}\opt_{t} \eqsepv
        \qquad\qquad\qquad\; \forall t \in \ic{\tzero,T{-}1}
        \eqfinp \label{eq:construction-vz}
    \end{align}
  \end{subequations}
  The random vector~$\va{v}\opt_{t}$ is well defined
  (hence so is~$\va{z}\opt_{t}$) and is integrable. Indeed,
  as the function \( g : \espacea{X}_{T} \to \mathbb{R}_+^{m} \)
  is assumed nonnegative in~\S\ref{ssec:constraint-standard} and
  using Inequality~\eqref{pb:constraint-exp}, we get
  \( 0 \leq \nesp{g(\va{x}\opt_{T})} \leq b_{\tzero} \).
  As $b_{\tzero} \in \mathbb{R}^{m}$, the random vector~$g(\va{x}\opt_{T})$
  is integrable, hence $\va{v}\opt_{t}$ in~\eqref{eq:construction-vv}
  is the difference between two random vectors in~$\mathbb{R}^{m}$,
  hence is well defined, and is integrable.

  By construction, the two processes~$\va{v}\opt$ and~$\va{z}\opt$ satisfy the constraints
  \eqref{pb:constraint-equiv-dynz}--\eqref{pb:constraint-equiv-mesv}--\eqref{pb:constraint-equiv-mart}.
  Moreover, we have that
  \begin{align*}
    \va{z}\opt_{T}
    & = z_{\tzero} + \sum_{t=\tzero}^{T{-}1} \va{v}\opt_{t}
      \tag{by~\eqref{eq:construction-vz}} \\
    & = z_{\tzero} + g(\va{x}\opt_{T}) - \nesp{g(\va{x}\opt_{T})}
      \tag{by telescoping sum using~\eqref{eq:construction-vv}} \eqfinv
  \end{align*}
  and hence, by~\eqref{pb:constraint-exp} and~\eqref{eq:data-compatibility}, we get that
  \begin{equation*}
    g(\va{x}\opt_{T})-\va{z}\opt_{T} = \besp{g(\va{x}\opt_{T})}-z_{\tzero}
    \leq  b_{\tzero}- z_{\tzero} = 0 \eqfinv
  \end{equation*}
  so that Constraint~\eqref{pb:constraint-equiv-finl}
  is also fulfilled. We deduce that
  $(\va{u}\opt,\va{v}\opt,\va{x}\opt,\va{z}\opt)$
  is admissible for Problem~\eqref{pb:constraint-equiv}.
  Suppose that there would exist a solution
  $(\va{u}^{\flat},\va{v}^{\flat},\va{x}^{\flat},\va{z}^{\flat})$
  of Problem~\eqref{pb:constraint-equiv} with
  a strictly lower cost value than
  $(\va{u}\opt,\va{v}\opt,\va{x}\opt,\va{z}\opt)$.
  From the dynamics~\eqref{pb:constraint-equiv-dynz}, we would have
  \begin{equation*}
    \va{z}^{\flat}_{T} = z_{\tzero} + \sum_{t=\tzero}^{T{-}1} \va{v}^{\flat}_{t}
    \eqfinv
  \end{equation*}
  so that~$\nesp{\va{z}^{\flat}_{T}} = z_{\tzero}$
  by repeated uses of~\eqref{pb:constraint-equiv-mart}.
  Taking the expectation in~\eqref{pb:constraint-equiv-finl}
  would lead thus to
  \begin{equation*}
    \besp{g(\va{x}^{\flat}_{T})} \leq \nesp{\va{z}^{\flat}_{T}} =
    z_{\tzero} = b_{\tzero} \eqfinp
  \end{equation*}
  Then, $(\va{u}^{\flat},\va{x}^{\flat})$ would be admissible
  for Problem~\eqref{pb:constraint} with a strictly lower cost
  value than the optimal solution~$(\va{u}\opt,\va{x}\opt)$,
  which contradicts the assumed optimality of~$(\va{u}\opt,\va{x}\opt)$.
  We conclude that
  $(\va{u}\opt,\va{v}\opt,\va{x}\opt,\va{z}\opt)$
  is an optimal solution of Problem~\eqref{pb:constraint-equiv}.

  \medskip

  Conversely, let
  $(\va{u}^{\flat},\va{v}^{\flat},\va{x}^{\flat},\va{z}^{\flat})$
  be an optimal solution of Problem~\eqref{pb:constraint-equiv}.
  As shown in the first part of the proof,
  we have~$\besp{g(\va{x}^{\flat}_{T})} \leq b_{\tzero}$,
  so that~$(\va{u}^{\flat},\va{x}^{\flat})$ is admissible
  for Problem~\eqref{pb:constraint}.
  Suppose that there would exist a solution $(\va{u}\opt,\va{x}\opt)$
  of Problem~\eqref{pb:constraint} with a strictly lower
  cost value than $(\va{u}^{\flat},\va{x}^{\flat})$.
  Then, the quadruplet
  $(\va{u}\opt,\va{v}\opt,\va{x}\opt,\va{z}\opt)$ obtained
  by constructing~$\va{v}\opt$ and~$\va{z}\opt$
  by~\eqref{eq:construction-v} would give a strictly lower cost
  value for Problem~\eqref{pb:constraint-equiv} than
  $(\va{u}^{\flat},\va{v}^{\flat},\va{x}^{\flat},\va{z}^{\flat})$,
  which would be absurd. We conclude that
  $(\va{u}^{\flat},\va{x}^{\flat})$
  is an optimal solution of Problem~\eqref{pb:constraint}.
\end{proof}

Let us make a few comments about
Problem~\eqref{pb:constraint-equiv}.
\begin{itemize}
\item The nice features of the
  equivalent formulation~\eqref{pb:constraint-equiv}
  of Problem~\eqref{pb:constraint} are double. On the one hand,
  the initial constraint~\eqref{pb:constraint-exp} in expectation is replaced
  by an almost sure constraint~\eqref{pb:constraint-equiv-finl} on the final state, hence
  paving the way to use dynamic programming to solve
  Problem~\eqref{pb:constraint-equiv}.
  On the other hand, the parameter defining the right-hand side
  of the constraint~\eqref{pb:constraint-exp} in expectation
  in formulation~\eqref{pb:constraint} becomes a component~\eqref{pb:constraint-equiv-dynz} of the initial
  state in Problem~\eqref{pb:constraint-equiv}, thus leading to
time consistency as a consequence of dynamic programming.
\item The conditional expectation
  $\bespc{g(\va{x}_{T})}{\tribu{F}_{\tzero:t}}$ can be interpreted as
  the ``perception of the risk constraint~\eqref{pb:constraint-exp}'' at time~$t$.
  From the very definition of $\va{v}_{t}$, we have that
  \begin{equation*}
    \va{v}_{t} = \bespc{g(\va{x}_{T})}{\tribu{F}_{\tzero:t+1}} -
    \bespc{g(\va{x}_{T})}{\tribu{F}_{\tzero:t}} \eqfinv
  \end{equation*}
  from which we deduce that the control $\va{v}_{t}$ corresponds to the
  variation of this perception between time~$t$ and time $t+1$.
  The additional state $\va{z}_{t+1}$ in~\eqref{pb:constraint-equiv-dynz} is thus the cumulative
  variation of the risk contraint perception up to time $t+1$.
  Therefore, this new added state seems to be the minimal
  information which has to be added to the standard state
  in order to recover a dynamic programming principle.
\item Nevertheless, Problem \eqref{pb:constraint-equiv} is rather
  more intricate that Problem \eqref{pb:constraint}:
  \begin{enumerate}
    \item there are additional state and control processes~$\va{z}$
    and~$\va{v}$,
    \item the new control variables~$\va{v}_{t}$ has to be searched
    in the \emph{Hazard--Decision} framework as in~\eqref{pb:constraint-equiv-mesv},
    \item a new expectation constraint~\eqref{pb:constraint-equiv-mart}
    on the controls~$\va{v}_{t}$ appears at each time step.
  \end{enumerate}
\end{itemize}

\subsubsection{Extended dynamic programming equation and time consistency}

The interest of Problem~\eqref{pb:constraint-equiv}
is highlighted by the following theorem.

\begin{theorem}
\label{th:constraint-equivBellman}
Suppose that the primitive noise random process
$\va{w}$ = $(\va{w}_1$, $\ldots$, $\va{w}_T)$ takes a finite number of values,
and that the following induction (Bellman equation)
\begin{subequations}
  \label{eq:constraint-equivBellman}
  \begin{align}
    \mathscr{V}_{T}(x,z) = \;
    & K(x) + \fcara{\na{g(x) - z \leq 0}}(x,z)
      \label{eq:constraint-equivBellman-T} \eqfinv \\
    \mathscr{V}_{t}(x,z) = \;
    & \min_{u} \; \min_{\substack{\sigma\np{\va{v}} \subset \sigma\np{\va{w}_{t+1} } \\
      \nesp{\va{v}}=0}} \;
      \Besp{ L_{t}(x,u,\va{w}_{t+1}) +
      \mathscr{V}_{t+1}\bp{f_{t}(x,u,\va{w}_{t+1}),z + \va{v}} }
      \label{eq:constraint-equivBellman-t}
  \end{align}
\end{subequations}
is well-defined in the sense that all the functions
\( \mathscr{V}_t : \espacea{X}_t\times\espacea{Z}_t \to \RR_+ \cup\na{+\infty}
\) are measurable, for $t\in\ic{0,T}$.

Then, under Assumption~\ref{hyp:Markov},
Problem~\eqref{pb:constraint-equiv} starting at time~$\tzero=0$
can be solved by dynamic programming with associated Bellman
equation~\eqref{eq:constraint-equivBellman}, and its optimal value
is~$\mathscr{V}_{0}(x_{0},z_{0})$.
\end{theorem}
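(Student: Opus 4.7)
The plan is to prove the theorem by backward induction on~$t$, showing that $\mathscr{V}_t(x,z)$ equals the optimal value of Problem~\eqref{pb:constraint-equiv} starting at $\tzero = t$ with initial state~$(x,z)$. I would first absorb the almost sure final constraint~\eqref{pb:constraint-equiv-finl} into the terminal cost via the indicator $\fcara{\na{g(x)-z\leq 0}}$, exactly as in~\eqref{eq:constraint-equivBellman-T}, which recasts Problem~\eqref{pb:constraint-equiv} as an unconstrained multistage stochastic optimization problem on the extended state process~$(\va{x},\va{z})$ driven by the extended control process~$(\va{u},\va{v})$. The key observation is that the extended dynamics~\eqref{pb:constraint-equiv-dynx}--\eqref{pb:constraint-equiv-dynz} are Markovian in $(\va{x}_t,\va{z}_t)$: the next extended state depends only on the current extended state, the current extended controls, and~$\va{w}_{t+1}$.

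The base case~\eqref{eq:constraint-equivBellman-T} holds by construction. For the induction step, assuming $\mathscr{V}_{t+1}$ represents the optimal cost-to-go from any state at time~$t+1$, I would take an admissible policy, compute the conditional expectation of the total future cost given~$\tribu{F}_{\tzero:t}$, and apply the tower property. Under Assumption~\ref{hyp:Markov}, $\va{w}_{t+1}$ is independent of~$\tribu{F}_{\tzero:t}$, so this conditional expectation reduces to an integral over the law of~$\va{w}_{t+1}$ alone, evaluated at the current values of~$\va{x}_t$ and~$\va{z}_t$. Likewise, the coupling constraint~\eqref{pb:constraint-equiv-mart} collapses to the scalar constraint $\besp{\va{v}}=0$ on a random variable~$\va{v}$ measurable with respect to $\sigma(\va{w}_{t+1})$. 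Assembling these pieces produces precisely the Bellman operator of~\eqref{eq:constraint-equivBellman-t}, which gives the one-step inequality $\bespc{\mathrm{future\ cost}}{\tribu{F}_{\tzero:t}} \geq \mathscr{V}_t(\va{x}_t,\va{z}_t)$.

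The delicate point is the joint handling of the hazard--decision control~$\va{v}_t$ together with its coupling constraint~\eqref{pb:constraint-equiv-mart}. This is where the finite-support assumption on~$\va{w}$ plays a decisive role: for each $(x,z)$, the set of admissible~$\va{v}$ in~\eqref{eq:constraint-equivBellman-t} identifies with the finite-dimensional affine subspace of $(\RR^m)^n$ consisting of tuples $(v_1,\dots,v_n)$ satisfying $\sum_k \pi^{t+1}_k v_k = 0$, where $n$ is the cardinality of the support of~$\va{w}_{t+1}$ and $(\pi^{t+1}_k)_k$ are its probability weights. The inner minimization in~\eqref{eq:constraint-equivBellman-t} thus reduces to a finite-dimensional constrained optimization parametrized measurably by~$(x,z)$, and a measurable selection argument yields measurable feedbacks attaining the argmin, which in turn secures the measurability of~$\mathscr{V}_t$ required to close the induction hypothesis.

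I would conclude by verifying that the concatenation of the one-stage feedbacks obtained at each time defines an admissible policy for Problem~\eqref{pb:constraint-equiv} starting at $\tzero=0$ whose cost equals $\mathscr{V}_0(x_0,z_0)$, while the Bellman recursion simultaneously provides the matching lower bound on the cost of any admissible policy. The finiteness of the noise support ensures that every expectation manipulated in the argument reduces to a finite sum, so no additional integrability or regularity assumptions are needed beyond the measurability hypothesis already in the statement.
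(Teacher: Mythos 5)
Your proposal follows essentially the same route as the paper's proof: absorb the almost-sure final constraint into the terminal cost via the indicator, then run a backward induction whose crux is the interchange of the minimization over the hazard--decision control $\va{v}_{t}$ (subject to $\bespc{\va{v}_{t}}{\tribu{F}_{\tzero:t}}=0$) with the expectation, an interchange that is legitimate precisely because the noise has finite support. The differences are presentational: the paper organizes the induction as an equality of optimal \emph{values} of a nested family of truncated problems ${\cal P}_{\tau}$ (running from $0$ to $\tau$ with terminal cost $\mathscr{V}_{\tau}$), and isolates the interchange step as a standalone lemma proved by enumerating the atoms of $\sigma(\va{w}_{1},\dots,\va{w}_{t})$ and splitting the inner minimization into independent finite-dimensional subproblems --- which is exactly your ``finite-dimensional affine subspace'' observation, so the technical core is identical. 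One caution: your matching upper bound rests on measurable feedbacks ``attaining the argmin,'' but the theorem's hypotheses only guarantee measurability of the value functions, not attainment of the minima (in particular the outer minimization over $u\in\espacea{U}_{t}$ may have no minimizer and no measurable selection without further structure). The paper sidesteps this by working throughout with infima and never constructing optimizers; your argument should either do the same or fall back on $\epsilon$-optimal measurable selections to close the upper bound.
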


\begin{proof}
The proof of Theorem~\ref{th:constraint-equivBellman} in given
in Appendix~\ref{ann:progdyn-ext}.
\end{proof}

We deduce from Equation~\eqref{eq:constraint-equivBellman} that there
is no loss of optimality in looking for the optimal control~$\va{u}_t$
at time~$t$ of Problem~\eqref{pb:constraint-equiv} as induced by a measurable mapping
$\phi_{t}: \espacea{X}_t\times\espacea{Z}_t \rightarrow \espacea{U}_t$,
and for the optimal control~$\va{v}_{t+1}$ at time~$t$ as induced by a measurable mapping
$\varphi_{t}:\espacea{X}_t\times\espacea{Z}_t\times\espacea{W}_{t+1}\rightarrow\espacea{V}_t$.

Let us embed Problem~\eqref{pb:constraint-equiv} starting at time~$\tzero=0$
in the framework developed in \S\ref{ssect:uctc}.
The finite time span is~$\mathcal{T}=\ic{0,T{-}1}$,
the sequence of parameter sets is
$\na{\espacea{X}_{t}\times\espacea{Z}_{t}}_{t\in\mathcal{T}}$,
the sequences of control spaces are made of two sequences,
$\na{\espacef{U}_{t}}_{t\in\mathcal{T}}$
with~$\espacef{U}_{t}$ the space of measurable mappings
defined on~$\espacea{X}_{t}\times\espacea{Z}_{t}$
and taking values in~$\espacea{U}_{t}$,
and $\na{\espacef{V}_{t}}_{t\in\mathcal{T}}$ with
$\espacef{V}_{t}$ the space of measurable mappings defined
on $\espacea{X}_{t}\times\espacea{Z}_{t}\times\espacea{W}_{t+1}$
and taking values in~$\espacea{V}_{t}$.
Notice that there exists an additional set
$\espacea{X}_{T}\times\espacea{Z}_{T}$, where the final
state of the system takes values, but that this set is not
part of the sequence of parameter sets.
The sequence of cost functions~$\na{J_{t}}_{t\in\mathcal{T}}$, with
\begin{align*}
  J_{t} \; : \;
  & \espacea{X}_{t}\times\espacea{Z}_{t}\times
    \espacef{U}_{t}\times\espacef{V}_{t}\times\dots\times
    \espacef{U}_{T{-}1}\times\espacef{V}_{T{-}1}
    \, \longrightarrow \; \mathbb{R} 
    \eqfinv
\end{align*}
is defined by
\begin{align*}
  J_{t} (x_{t}
  & ,z_{t},\phi_{t},\varphi_{t},\dots, \phi_{T{-}1},\varphi_{T{-}1}) = \;
  \EE \bigg( \sum_{\tau=t}^{T{-}1} L_\tau
  \bp{\va{x}_\tau,\phi_{\tau}(\va{x}_\tau,\va{z}_\tau),\va{w}_{\tau+1}} \\
  &\hphantom{,z_{t},\phi_{t},\varphi_{t},\dots, \phi_{T{-}1},\varphi_{T{-}1}) = \;\EE \bigg( \sum_{\tau=t}^{T{-}1}}
    + K \np{\va{x}_T}
    + \fcara{\na{g(x) - z \leq 0}}(\va{x}_{T},\va{z}_{T}) \bigg)
    \eqfinv \\
  & \text{ with } \;
    \va{x}_{t} = x_{t} \eqsepv
    \va{x}_{\tau+1} =
    f_\tau \bp{\va{x}_\tau,\phi_{\tau}(\va{x}_\tau,\va{z}_\tau),\va{w}_{\tau+1}}
    \eqsepv \text{ for }\tau\in \ic{t,T{-}1} \eqfinv \\
  & \hphantom{\text{ with } \;}
    \va{z}_{t} = z_{t} \eqsepv
    \va{z}_{\tau+1} = \va{z}_\tau +
    \varphi_{\tau}(\va{x}_\tau,\va{z}_{\tau},\va{w}_{\tau+1})
    \eqsepv \text{ for } \tau\in \ic{t,T{-}1} \eqfinv
\intertext{if $\Bespc{\varphi_{\tau}(\va{x}_\tau,\va{z}_{\tau},\va{w}_{\tau+1})}
{\va{x}_{\tau},\va{z}_{\tau}} = 0 \eqsepv
    \forall \tau\in \ic{t,T{-}1}$, and by}
    J_{t} (x_{t},
    & z_{t},\phi_{t},\varphi_{t},\dots, \phi_{T{-}1},\varphi_{T{-}1})
      = + \infty \qquad \textrm{otherwise.}
\end{align*}

From the optimization data set
$\mathscr{E} =\bp{\mathcal{T},
  \na{\espacea{X}_{t}\times\espacea{Z}_{t}}_{t\in\mathcal{T}},
  \na{\espacef{U}_{t}\times\espacef{V}_{t}}_{t\in\mathcal{T}},
  \na{J_{t}}_{t\in\mathcal{T}}}$
and for a given~$t \in \mathcal{T}$, we build, as in
Definition~\ref{def:universalsolution}, the
family of optimization problems $\mathscr{P}_{t}^{\mathscr{E}}=
\ba{\mathscr{P}_{t}^{\mathscr{E}}(x_{t},z_{t})}_{(x_{t},z_{t})
  \in\espacea{X}_{t}\times\espacea{Z}_{t}}$,
with Problem~$\mathscr{P}_{t}^{\mathscr{E}}(x_{t},z_{t})$
being
\begin{equation}
  \label{pb:constraint-equiv-Jt}
  \min_{\substack{(\phi_{t},\dots,\phi_{T{-}1})\in
      \espacef{U}_{t}\times\dots\times\espacef{U}_{T{-}1} \\
      (\varphi_{t},\dots,\varphi_{T{-}1})\in
      \espacef{V}_{t}\times\dots\times\espacef{V}_{T{-}1}}} \;\;
  J_{t} (x_{t},z_{t},\phi_{t},\varphi_{t},\dots,\phi_{T{-}1},\varphi_{T{-}1})
  \eqfinp
\end{equation}
Optimal strategies $(\phi_{0}\opt, \dots, \phi_{T{-}1}\opt)$ and
$(\varphi_{0}\opt, \dots, \varphi_{T{-}1}\opt)$ obtained by solving
for $t=0$ Problem~\eqref{pb:constraint-equiv-Jt} using the dynamic
programming equation~\eqref{eq:constraint-equivBellman} are such that,
for any~$t \in \mathcal{T}$, $(\phi_{t}\opt, \dots, \phi_{T{-}1}\opt)$
and $(\varphi_{t}\opt, \dots, \varphi_{T{-}1}\opt)$ is an optimal
solution of Problem~\eqref{pb:constraint-equiv-Jt} for any initial
state~$(x_{t},z_{t})$.

We deduce that solving Problem~\eqref{pb:constraint-equiv}
dy dynamic programming fully answers the goal of time consistency
enounced at the end of~\S\ref{ssec:constraint-standard}.
Indeed, for all $t\in\mathcal{T}$, the subsequence of optimal
strategies $(\phi_{t}\opt, \dots, \phi_{T{-}1}\opt)$ is
a universal solution for the family
$\ba{\mathscr{P}_{t}^{\mathscr{E}}(x_{t},z_{t})}_{(x_{t},z_{t})
  \in\espacea{X}_{t}\times\espacea{Z}_{t}}$
of optimization problems~\eqref{pb:constraint-equiv-Jt}.
By Proposition~\ref{pr:equivalence},
a solution of Problem~$\mathscr{P}_{t}^{\mathscr{E}}(x_{t},z_{t})$
induces a solution of Problem~\eqref{pb:constraint} starting
at time~$\tzero=t$ with initial state~$x_{t}$ and final constraint
level~$b_{t}=z_{t}$.

\begin{figure}[htb]
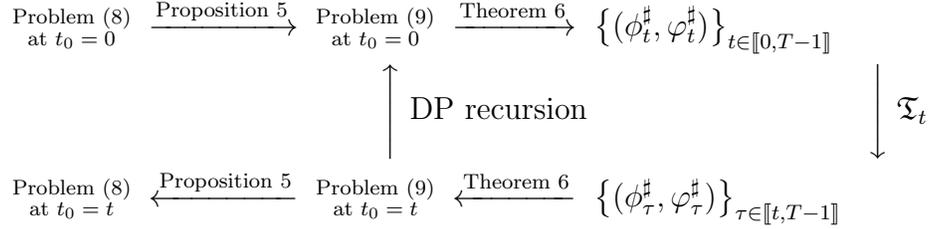

\begin{align*}
  \substack{\text{Problem~\eqref{pb:constraint}} \\ \text{at $\tzero=0$}}
 & \; \xrightarrow{\text{Proposition~\ref{pr:equivalence}}} \;
   \substack{\text{Problem~\eqref{pb:constraint-equiv}}\\\text{at~$\tzero=0$}}
      \; \xrightarrow{\text{Theorem~\ref{th:constraint-equivBellman}}} \;
   \ba{(\phi_{t}\opt,\varphi_{t}\opt)}_{t\in\ic{0,T{-}1}} \\
 & \qquad\qquad\qquad\qquad \Bigg\uparrow \; \text{DP recursion}
   \qquad\qquad\qquad\qquad\quad \Bigg\downarrow  \; \Truncat_{t} \\
  \substack{\text{Problem~\eqref{pb:constraint}}\\\text{at $\tzero=t$}}
 & \; \xleftarrow{\text{Proposition~\ref{pr:equivalence}}} \;
   \substack{\text{Problem~\eqref{pb:constraint-equiv}}\\\text{at $\tzero=t$}}
   \; \xleftarrow{\text{Theorem~\ref{th:constraint-equivBellman}}} \;
   \ba{(\phi_{\tau}\opt,\varphi_{\tau}\opt)}_{\tau\in\ic{t,T{-}1}}
\end{align*}
\caption{Links between the original problem~\eqref{pb:constraint}
         and the extended problem~\eqref{pb:constraint-equiv}}
\end{figure}

Moreover, the Bellman functions defined
by~\eqref{eq:constraint-equivBellman} involve a finite
dimensional state, so that their computation becomes tractable.
We will provide a numerical illustration of this last point in Sect.~\ref{sect:numer}.

\subsection{Formulation with dualized constraint}
\label{ssec:constraint-dual}

We finish this section by presenting a way to solve
Problem~\eqref{pb:constraint} using Lagrangian duality, and we show that
the dualized problem does \emph{not} display time consistency.

\subsubsection{Dualized formulation}

By dualizing the expectation constraint~\eqref{pb:constraint-exp}
in Problem~\eqref{pb:constraint} with a given (fixed)
multiplier~$\lambda\in\mathbb{R}^{m}$, we obtain the following
problem\footnote{%
  For the sake of simplicity, we suppose that the function \( g : \espacea{X}_{T} \to \mathbb{R}_+^{m} \),
introduced at the beginning of~\S\ref{ssec:constraint-standard}, is bounded to
ensure integrability in~\eqref{pb:constraint-dual_criterion}
\label{ft:g_bounded}}
\begin{subequations}
  \label{pb:constraint-dual}
  \begin{align}
    \min_{\va{u},\va{x}} \quad
    & \bgesp{\sum_{t=0}^{T{-}1}
      L_t \bp{\va{x}_t,\va{u}_t,\va{w}_{t+1}} +
      K \np{\va{x}_T} + \lambda \cdot g\np{\va{x}_{T}}}
      \eqfinv
   \label{pb:constraint-dual_criterion}
   \\
    \text{s.t.} \quad
    & \va{x}_{0} = x_{0} \eqfinv \\
    & \va{x}_{t+1} = f_t \bp{\va{x}_t,\va{u}_t,\va{w}_{t+1}}
      \eqsepv \forall t=0, \dots, T{-}1 \eqfinv \\
    & \sigma\bp{\va{u}_t} \subset
      \sigma\bp{
      \va{w}_{1},\dots,\va{w}_t}
      \eqsepv \forall t = 0, \dots, T{-}1 \eqfinp
  \end{align}
By weak duality, we have that the optimal value of this problem is
a lower bound of the optimal value of Problem~\eqref{pb:constraint} for
any value~$\lambda$. In some cases, Problems~\eqref{pb:constraint-dual}
and~\eqref{pb:constraint} are equivalent as specified
by the following theorem.
\end{subequations}

\begin{theorem}
\label{th:pb:constraint-dual}
Assume that there exists~$\lambda_{0}\opt \in \mathbb{R}^{m}$ such that
Problem~\eqref{pb:constraint-dual} with~$\lambda = \lambda_{0}\opt$ admits
a solution~$(\va{u}\opt,\va{x}\opt)$.
We denote by~$\overline{b}_{0}^{\lambda_{0}\opt}$ the expectation
of~$g(\va{x}\opt_{T})$ (which exists by Footnote~\ref{ft:g_bounded})
\begin{equation*}
\overline{b}_{0}^{\lambda_{0}\opt} = \nesp{g(\va{x}\opt_{T})} \eqfinv
\end{equation*}
and we assume that the constraint level~$b_{0}$ in Problem~\eqref{pb:constraint}
is such that~$b_{0} \in B_{0}^{\lambda_{0}\opt}$, with
\begin{multline*}
  B_{0}^{\lambda_{0}\opt} = \big\{ b\in\mathbb{R}^{m} \;\text{ s.t. }\;
  b_{i} = \np{\overline{b}_{0}^{\lambda_{0}\opt}}_{i}
  \text{ if } \np{\lambda_{0}\opt}_{i} > 0 \\
  \;\text{ and }\;
  b_{i} \geq \np{\overline{b}_{0}^{\lambda_{0}\opt}}_{i}
  \text{ if } \np{\lambda_{0}\opt}_{i} = 0
  \eqsepv \forall i=1,\dots,m \big\}
  \eqfinp
\end{multline*}
Then, the solution $(\va{u}\opt,\va{x}\opt)$ of Problem~\eqref{pb:constraint-dual}
is also a solution of Problem~\eqref{pb:constraint}.
\end{theorem}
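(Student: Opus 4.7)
The proof is a classical Lagrangian duality argument exploiting complementary slackness, which is encoded into the very definition of the set~$B_{0}^{\lambda_{0}\opt}$. Note that the definition of~$B_{0}^{\lambda_{0}\opt}$ implicitly assumes~$\lambda_{0}\opt \geq 0$ componentwise (only the cases $>0$ and $=0$ are considered), consistent with dualizing a ``$\leq 0$'' constraint. I would organize the proof in two steps: admissibility and optimality.

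\textbf{Step 1 (Admissibility).} I would first show that $(\va{u}\opt,\va{x}\opt)$ is admissible for Problem~\eqref{pb:constraint}. The dynamics, initial condition and measurability constraints are identical in Problems~\eqref{pb:constraint} and~\eqref{pb:constraint-dual}, so only the expectation constraint~\eqref{pb:constraint-exp} has to be checked. Inspecting the definition of~$B_{0}^{\lambda_{0}\opt}$ component by component, whether $\np{\lambda_{0}\opt}_{i}>0$ or $\np{\lambda_{0}\opt}_{i}=0$, we always have $\np{\overline{b}_{0}^{\lambda_{0}\opt}}_{i} \leq \np{b_{0}}_{i}$, so that $\besp{g(\va{x}\opt_{T})} = \overline{b}_{0}^{\lambda_{0}\opt} \leq b_{0}$, which is exactly~\eqref{pb:constraint-exp}.

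\textbf{Step 2 (Optimality).} Let $(\va{u},\va{x})$ be any admissible pair for Problem~\eqref{pb:constraint}. Since the feasible set of Problem~\eqref{pb:constraint} is contained in that of Problem~\eqref{pb:constraint-dual} (the expectation constraint has been removed in the latter), the optimality of~$(\va{u}\opt,\va{x}\opt)$ for Problem~\eqref{pb:constraint-dual} with $\lambda=\lambda_{0}\opt$ yields
\begin{equation*}
\bgesp{\sum_{t=0}^{T{-}1} L_t\np{\va{x}\opt_t,\va{u}\opt_t,\va{w}_{t+1}} + K(\va{x}\opt_T)} + \lambda_{0}\opt \cdot \overline{b}_{0}^{\lambda_{0}\opt}
\leq
\bgesp{\sum_{t=0}^{T{-}1} L_t\np{\va{x}_t,\va{u}_t,\va{w}_{t+1}} + K(\va{x}_T)} + \lambda_{0}\opt \cdot \besp{g(\va{x}_T)} \eqfinp
\end{equation*}
Rearranging, the desired inequality on the original criterion of Problem~\eqref{pb:constraint} will follow provided that $\lambda_{0}\opt \cdot \bp{\besp{g(\va{x}_T)} - \overline{b}_{0}^{\lambda_{0}\opt}} \leq 0$. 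This is precisely a complementary-slackness verification: on indices $i$ with $\np{\lambda_{0}\opt}_{i} = 0$ the $i$-th term vanishes, while on indices $i$ with $\np{\lambda_{0}\opt}_{i} > 0$, the definition of~$B_{0}^{\lambda_{0}\opt}$ gives $\np{b_{0}}_{i} = \np{\overline{b}_{0}^{\lambda_{0}\opt}}_{i}$, so admissibility of~$(\va{u},\va{x})$ for Problem~\eqref{pb:constraint} forces $\besp{g(\va{x}_T)}_{i} \leq \np{b_{0}}_{i} = \np{\overline{b}_{0}^{\lambda_{0}\opt}}_{i}$, which combined with $\np{\lambda_{0}\opt}_{i} \geq 0$ gives a nonpositive contribution.

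Combining Steps~1 and~2 shows that $(\va{u}\opt,\va{x}\opt)$ is admissible for Problem~\eqref{pb:constraint} and achieves a criterion value no larger than that of any other admissible point, hence it is optimal. The only subtle point, which I would flag explicitly, is that the carefully chosen description of~$B_{0}^{\lambda_{0}\opt}$ is exactly what is required to make the cross term $\lambda_{0}\opt \cdot \bp{\besp{g(\va{x}_T)} - \overline{b}_{0}^{\lambda_{0}\opt}}$ nonpositive for every competitor~$(\va{u},\va{x})$ admissible in the primal problem; without the equality on active-multiplier components, one could only conclude a weak duality bound rather than a primal optimality statement.
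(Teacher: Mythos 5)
Your proof is correct. It differs from the paper's in form rather than in substance: the paper disposes of Theorem~\ref{th:pb:constraint-dual} in one line by invoking the extension of Everett's theorem (Theorem~\ref{th:everett} in Appendix~\ref{ann:everett}), whose proof runs through a weak-duality chain --- rewriting $J(\overline{u}^{\lambda})$ as $\min_{u} J(u) + \bscal{\lambda}{\Theta(u)-b}$ using the fact that $\bscal{\lambda}{b-\overline{b}^{\lambda}}=0$ on $B^{\lambda}$, then bounding this by $\sup_{\mu\geq 0}\min_{u}$ and in turn by $\inf_{u}\sup_{\mu\geq 0}$, which equals the constrained optimal value. You instead compare $(\va{u}\opt,\va{x}\opt)$ directly against an arbitrary competitor admissible for Problem~\eqref{pb:constraint}, and show that the cross term $\lambda_{0}\opt\cdot\bp{\besp{g(\va{x}_{T})}-\overline{b}_{0}^{\lambda_{0}\opt}}$ is nonpositive by complementary slackness. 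The two arguments rest on exactly the same structural feature of $B_{0}^{\lambda_{0}\opt}$ (equality on components where the multiplier is active, inequality where it vanishes); yours is self-contained and more elementary, while the paper's is modular, isolating a reusable abstract statement. Your remark that the definition of $B_{0}^{\lambda_{0}\opt}$ tacitly presupposes $\lambda_{0}\opt\geq 0$ is well taken --- the theorem statement only says $\lambda_{0}\opt\in\mathbb{R}^{m}$, whereas the appendix version explicitly requires $\lambda\in\mathbb{R}^{m}_{+}$, and nonnegativity is indeed needed in your Step~2 (and in the paper's weak-duality step).
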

\begin{proof}
The result is a direct consequence of the extension
of Everett's Theorem given in Appendix~\ref{ann:everett}.
\end{proof}

\begin{remark}
  The Everett argument that has been used here can be
  replaced by a more binding duality argument, namely,
  Problem~\eqref{pb:constraint} admits a saddle
  point~$(\va{u}\opt$, $\va{x}\opt$, $\lambda_{0}\opt)$ when
  dualizing Constraint~\eqref{pb:constraint-exp}.
\end{remark}

\subsubsection{Discussion about time consistency}

Since Problem~\eqref{pb:constraint-dual} falls within
the standard dynamic programming framework, there is
no loss of optimality to look for the optimal
controls~$\va{u}\opt_{t}$ of Problem~\eqref{pb:constraint-dual},
and hence of Problem~\eqref{pb:constraint},
as feedback strategies~$\phi\opt_{t} : \espacea{X}_t\rightarrow \espacea{U}_t$
depending on the state variable~$\va{x}_t$.

However, we do not claim that the optimal feedbacks $\phi\opt_{t}$
obtained by this argument have any specific properties in terms
of time consistency.
Indeed, assume as in Theorem~\ref{th:pb:constraint-dual} that there
exists a~$\lambda_{0}\opt$ such that~$b_{0} \in B_{0}^{\lambda_{0}\opt}$.
The parameter~$\lambda_{0}\opt$ implicitly depends on both the initial
condition~$x_{0}$ \emph{and} the constraint level~$b_{0}$, so that the
optimal feedbacks~$\phi\opt_{t}$ of Problem~\eqref{pb:constraint-dual},
which are parameterized by~$\lambda_{0}\opt$, are accordingly implicitly
parameterized by the pair~$(x_{0},b_{0})$ and therefore do not satisfy
the property of time consistency. Moreover, if we write an optimization
problem similar to Problem~\eqref{pb:constraint-dual} starting at
an initial time~$t>0$ with this value~$\lambda_{0}\opt$
\begin{subequations}
  \label{pb:constraint-dual-t}
  \begin{align}
    \min_{\va{u},\va{x}} \quad
    & \bgesp{\sum_{\tau=t}^{T{-}1}
      L_\tau \bp{\va{x}_\tau,\va{u}_\tau,\va{w}_{\tau+1}} +
      K \np{\va{x}_T} + \lambda_{0}\opt \cdot g\np{\va{x}_{T}}}
      \eqfinv \\
    \text{s.t.} \quad
    & \va{x}_{t} = x_{t} \eqfinv \\
    & \va{x}_{\tau+1} = f_\tau \bp{\va{x}_\tau,\va{u}_\tau,\va{w}_{\tau+1}}
      \eqsepv \forall \tau \in \ic{t,T{-}1} \eqfinv \\
    & \sigma\bp{\va{u}_\tau} \subset
      \sigma\bp{
      \va{w}_{t+1},\dots,\va{w}_\tau}
      \eqsepv \forall \tau = t, \dots, T{-}1 \eqfinv
  \end{align}
\end{subequations}
there is no reason that the optimal solution
$(\va{x}\opt_{t},\dots,\va{x}\opt_{T})$
of Problem~\eqref{pb:constraint-dual-t} satisfies
the constraint $\nesp{g(\va{x}\opt_{T})} \leq b_{0}$,
that is, there is no reason to satisfy
the relation~$b_{0} \in B_{t}^{\lambda_{0}\opt}$.
Of course, it may exists some~$\lambda_{t}\opt$ such that
$b_{0} \in B_{t}^{\lambda_{t}\opt}$, but
usually~$\lambda_{t}\opt\neq\lambda_{0}\opt$.

Thus, the sequence~$(\phi_{0}\opt, \dots, \phi_{T{-}1}\opt)$
of controls obtained by the dual approach, that is, by solving
Problem~\eqref{pb:constraint-dual}, is not time consistent (in the sense of
Definition~\ref{def:timeconsistency}).

\section{Numerical experiments \label{sect:numer}}

We illustrate numerically whether time consistency holds true or not
on a simple dam management problem developed
in~\S\ref{A_dam_management_problem}.
In~\S\ref{Resolution_of_the_problem_with_dualized__expectation_constraint},
we provide a numerical resolution of the problem with dualized  expectation
constraint (as seen in~\S\ref{ssec:constraint-dual}).
In~\S\ref{Resolution_by_extended_dynamic_programming},
we provide a numerical resolution by extended dynamic programming
(as seen in~\S\ref{ssec:constraint-equivalent}).

\subsection{A dam management problem}
\label{A_dam_management_problem}

We consider here a basic dam model for a management problem.
Let $T>0$ denote a positive integer (horizon) and
$ \ic{0,T} $ be the optimization time span,
and let $ \bp{\omeg,\trib,\prbt}$ be a probability space.
For any time~$t$ in~$\ic{0,T{-}1} $, we consider
the following real valued random variables:
\begin{itemize}
\item $\va{x}_{t}$, the \emph{water storage volume}
  in the dam at the beginning of time interval $[t,t+1)$,
\item $\va{u}_{t}$, the \emph{decided amount of water
    to be turbinated} during time interval $[t,t+1)$,
  set at the beginning of the time interval $[t,t+1)$,
  and constrained to belong to
an interval $\nc{\underline{u},\overline{u}}$,
\item $\va{w}_{t+1}$, the \emph{amount of water inflow}
  in the dam during time interval $[t,t+1)$.
\end{itemize}

Let $\underline{x}$ (resp. $\overline{x}$) denotes the minimum
(resp. maximum) water volume of the dam, and let $x_{0}$ be
the dam volume at time~$0$. The decision~$\va{u}_{t}$
can be implemented only if there is enough water in the dam,
that is, the turbinated water during a time interval cannot
exceed the quantity of water present in the dam. Then,
the \emph{real amount of turbinated water} during
the time interval~$[t,t+1)$ is
\begin{equation*}
  \widetilde{\va{u}}_{t+1}= \min
  \ba{\va{u}_{t},\va{x}_{t}+\va{w}_{t+1}-\underline{x}}
  \eqfinp
\end{equation*}
The maximal dam volume~$\overline{x}$ is taken into account
by accepting reservoir overflow: if the forthcoming water
volume $\va{x}_{t}+\va{w}_{t+1}-\widetilde{\va{u}}_{t}$
is greater than $\overline{x}$, then the dam water surplus
$\va{x}_{t}+\va{w}_{t+1}-\widetilde{\va{u}}_{t}-\overline{x} $
spills out. The dam dynamics is written accordingly:
\begin{align*}
  \va{x}_{0} \;\;
  & = x_{0} \eqfinv \\
  \va{x}_{t+1}
  & = \min\ba{\overline{x},
    \va{x}_{t}-\widetilde{\va{u}}_{t+1}+\va{w}_{t+1}}
                   = \min\ba{\overline{x},
    \max\na{\underline{x},\va{x}_{t}-\va{u}_{t}+\va{w}_{t+1}}}
    \eqfinp
\end{align*}
The turbinated water during the time interval~$[t,t+1)$ produces
electricity which is sold at a given price~$p_{t}$. We assume that
the sequence $(p_{0},\dots,p_{T{-}1})$ of prices  is deterministic.
The dam revenue to be maximized is thus
\begin{equation*}
  \bgesp{\sum_{t=0}^{T{-}1} p_{t} \widetilde{\va{u}}_{t}} \eqfinp
\end{equation*}
Compared to classical dam management model, we do not add
an explicit final cost, but we rather constrain the final
level of water in the dam at the end of the time span
by a risk contraint. Indeed, we consider a probability
constraint on the dam water volume at final time~$T$,
namely
\begin{equation*}
  \bprob{\va{x}_{T}\geq \ell} \geq \pi \eqfinv
\end{equation*}
where the water level $\ell$ and the probability level $\pi$
are given real numbers. Ultimately, the problem we have to
solve is
\begin{subequations}
  \label{eq:dam}
  \begin{align}
    \min_{\va{u},\va{x}} \;\;
    & \bgesp{\sum_{t=0}^{T{-}1} - p_{t} \cdot
      \min\ba{\va{u}_{t},\va{x}_{t}+\va{w}_{t+1}-\underline{x}}}
      \label{eq:dam-cost} \eqfinv \\
    \text{s.t. } \quad
    & \va{x}_{0} = x_{0} \label{eq:dam-dynam-x0} \eqfinv \\
    & \va{x}_{t+1} =
      \min\ba{\overline{x},
      \max\na{\underline{x},\va{x}_{t}-\va{u}_{t}+\va{w}_{t+1}}}
      \label{eq:dam-dynam-xt} \eqfinv \\
    & \underline{u} \leq \va{u}_{t} \leq \overline{u}
      \label{eq:dam-dynam-ut}  \eqfinv \\
    & \sigma\bp{\va{u}_{t}} \subset
      \sigma\bp{
      \va{w}_{1},\dots,\va{w}_t}
      \label{eq:dam-measure} \eqfinv \\
    & \bprob{\va{x}_{T}\geq \ell} \geq \pi
      \label{eq:dam-prob}  \eqfinp
  \end{align}
\end{subequations}
We recall that the probability constraint~\eqref{eq:dam-prob}
can be rewritten as an expectation constraint
\begin{equation}
  \label{eq:dam-exp}
  \pi - \besp{\findi{\mathbb{R}^{+}}(\va{x}_{T}-\ell)} \leq 0
  \eqfinv
\end{equation}
where $\findi{\mathbb{R}^{+}} : \mathbb{R} \rightarrow \mathbb{R}$
is the Heaviside step function:
\begin{equation*}
  \findi{\mathbb{R}^{+}}(y) =
  \begin{cases}
    0 & \text{ if } y < 0 \\
    1 & \text{ if } y \geq 0
  \end{cases}
  \eqfinp
\end{equation*}

We assume that the water inflows
$\va{w}_{1},\dots,\va{w}_{T}$ are independent random
variables with a known probability distribution on the interval
$\nc{\underline{w},\overline{w}}$.

To numerically solve Problem~\eqref{eq:dam},
we use the following parameter values:
final time $T=12$;
initial state $x_{0}=10$;
state bounds $\nc{\underline{x},\overline{x}}=\nc{0,20}$;
control bounds $\nc{\underline{u},\overline{u}}=\nc{0,3}$;
noise bounds $\nc{\underline{w},\overline{w}}=\nc{0,4}$;
price sequence $p=\np{10, 10, 10, 8, 6, 4, 4, 4
   , 4, 6, 8, 10}$;
final dam water level $\ell = 10$;
required probability level $\pi = 0.9$.
Moreover, we assume that the variables~$\va{x}_{t}$,
$\va{u}_{t}$ and $\va{w}_{t}$ take discrete values
within their respective bounds, with respective
discretization steps equal to~$0.1$, $0.3$ and $0.2$.
The optimization problem thus corresponds to the control
of a discrete state space Markov chain.
The discrete probability distribution of each random variable~$\va{w}_{t}$ is uniform.
We represent on Figure~\ref{fig:trajectoires-bruit-prix}
some selected trajectories of the noise process
$(\va{w}_{1},\dots,\va{w}_{T})$ and the sequence of
the deterministic prices~$p$. These noise trajectories
are used in the sequel to illustrate the behavior of
the different optimization algorithms.

\begin{figure}[hbtp]
  \begin{center}
    \mbox{\includegraphics[scale=\figscale]{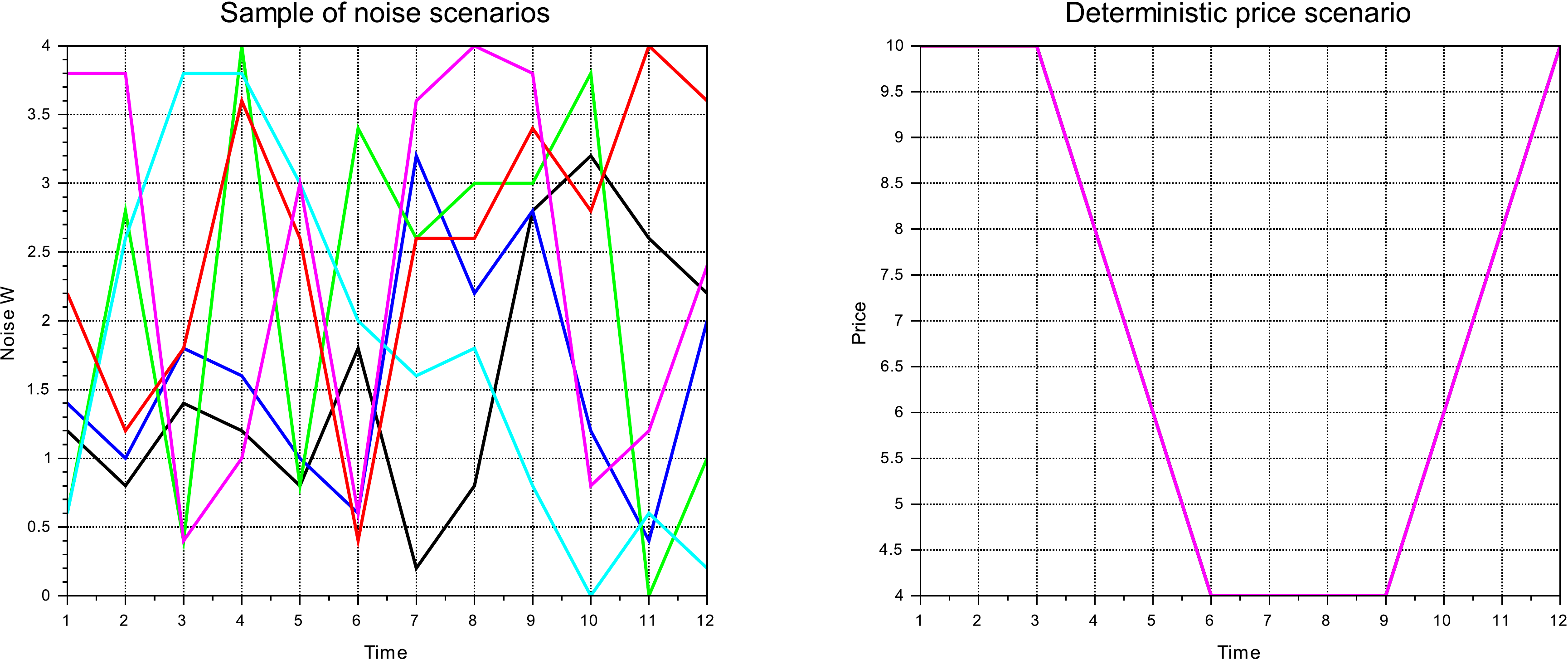}}
    \caption{\label{fig:trajectoires-bruit-prix}Samples of the noise process~$\mathbf{W}$ (left)
      and price scenario~$p$ (right)}
  \end{center}
\end{figure}

\subsection{Resolution of the problem with dualized  expectation constraint}
\label{Resolution_of_the_problem_with_dualized__expectation_constraint}

The method used here to solve Problem~\eqref{eq:dam} is based on the
duality argument described in~\S\ref{ssec:constraint-dual}.
The probability constraint \eqref{eq:dam-prob} is dualized
with associated multiplier $\lambda$, which leads, in the cost
function \eqref{eq:dam-cost}, to an added term of the form:
\begin{equation*}
  \lambda \: \besp{\pi-\findi{\mathbb{R}^{+}}(\va{x}_{T}-\ell)}
  \eqfinp
\end{equation*}
Instead of guessing the parameter value~$\lambda$ leading
to the constraint level~$\ell$, we solve the dual problem
\begin{equation}
  \max_{\lambda \geq 0} \;\; \varphi(\lambda) \eqfinv
  \label{pb:dual}
\end{equation}
with
\begin{subequations}
  \begin{align}
    \varphi(\lambda) \; = \;
    & \min_{\va{u},\va{x}}
      \bgesp{\sum_{t=0}^{T{-}1} - p_{t} \cdot \widetilde{\va{u}}_{t}
      \; - \; \lambda \cdot \findi{\mathbb{R}^{+}}(\va{x}_{T}-\ell) }
      \eqfinv \\
    & \text{under constraints
      \eqref{eq:dam-dynam-x0}--\eqref{eq:dam-dynam-xt}--\eqref{eq:dam-dynam-ut}--\eqref{eq:dam-measure}.}
  \end{align}
  \label{fct:dual}
\end{subequations}


The Uzawa algorithm consists in maximizing the dual
function~$\varphi$ using a projected gradient algorithm.
At iteration~$k$ of the algorithm, knowing the value
$\lambda^{(k)}$ of the multiplier, we perform the three 
following steps.
\begin{itemize}
\item
  Compute~$\varphi(\lambda^{(k)})$, that is,
  solve the minimization problem~\eqref{fct:dual}
  with~$\lambda=\lambda^{(k)}$; this minimization
  is performed using dynamic programming
  (1-dimensional state variable), hence
  furnishing optimal feedbacks~$\phi^{(k+1)}_{t}$
  and optimal state variables~$\va{x}^{(k+1)}_{t}$.
\item
  Compute the probability distribution of the random
  variable~$\va{x}^{(k+1)}_{T}$ by integrating the
  dynamics~\eqref{eq:dam-dynam-x0}-\eqref{eq:dam-dynam-xt}
  using the optimal feedbacks~$\phi^{(k+1)}_{t}$, and then compute
  the value of the constraint $\nprob{\va{x}_{T}^{(k+1)}\geq \ell}$ .
\item Update the multiplier $\lambda$ by
  a projected gradient step:
  \begin{equation*}
    \lambda^{(k+1)} = \proj{\mathbb{R}^{+}}
    {\lambda^{(k)}+\rho\bp{\pi-\nprob{\va{x}^{(k+1)}_{T}\geq \ell}}}
    \eqfinp
  \end{equation*}
\end{itemize}
For the problem under consideration, and despite the
potential nonconvexity induced by the final probability
constraint~\eqref{eq:dam-prob}, the Uzawa algorithm
converges in about 10 iterations, leading to an optimal
multiplier value~$\lambda\opt = 62.405$.
Once the algorithm has converged, we obtain the optimal
feedback sequence~$\na{\phi_{t}\opt}_{t=0,\dots,T{-}1}$
by solving Problem~\eqref{fct:dual} by dynamic programming
with~$\lambda=\lambda\opt$.
Then, we simulate the dynamics of the dam
along some noise trajectories using these optimal feedbacks
$\phi_{t}\opt$. The results given in
Table~\ref{tab:uzawa} are obtained by simulating 10,000
noise trajectories, and illustrates the adequacy between
optimization and simulation.

\begin{table}[htb]
  \begin{center}
    \begin{tabular}{|l||l|}
      \hline
      \textbf{Uzawa optimization}      & \textbf{Monte Carlo simulation} \\
      \hline\hline
      Bellman value at~$t=0$: $-188.90$ & Monte Carlo cost: $-188.94$     \\
      \hline
      Required probability: $0.900$     & Estimated probability: $0.903$  \\
      \hline
    \end{tabular}
  \end{center}
  \caption{Optimization and simulation for the duality method}
  \label{tab:uzawa}
\end{table}

On Figure~\ref{fig:trajectoires-complete-uzawa}, we
represent the dam water level and control trajectories over~$[0,T]$
obtained by simulating with the optimal feedbacks~$\phi_{t}\opt$
along the noise trajectories depicted on
Figure~\ref{fig:trajectoires-bruit-prix}.
We observe that the optimization ``gives up'' for certain trajectories
(the lowest one to the left of Figure~\ref{fig:trajectoires-complete-uzawa})
to reach the final level~$\ell$ appearing in the constraint in probability:
we turbine as much water as possible, leaving the state evolve towards
the minimum level~$\underline{x}$. This observation is conform to the expected
behavior of an optimization problem with a probability constraint.

\begin{figure}[hbtp]
  \begin{center}
    \includegraphics[scale=\figscale]{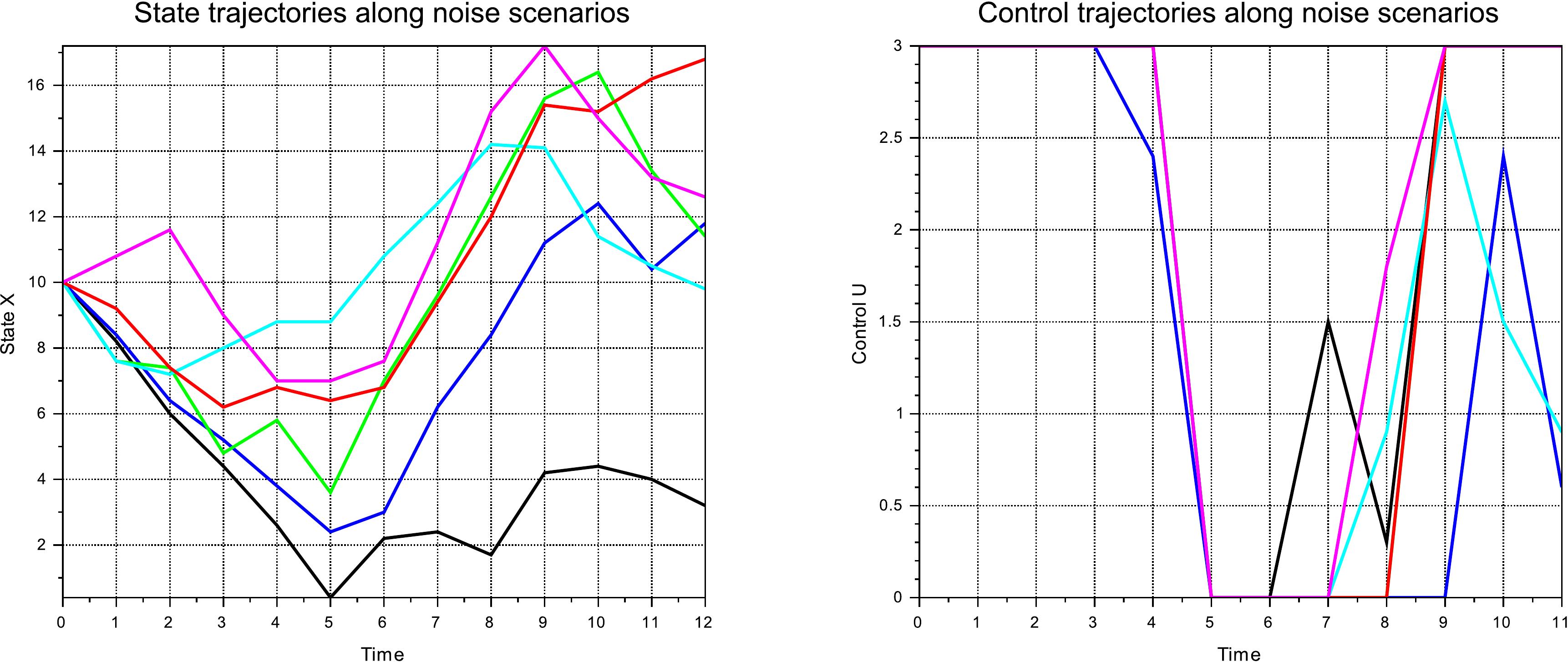}
    \caption{\label{fig:trajectoires-complete-uzawa}Optimal state~$\mathbf{X}$ and control~$\mathbf{U}$
      trajectories over~$[0,T]$ obtained by the duality method}
  \end{center}
\end{figure}

Finally, we can use the optimal feedbacks~$\phi\opt_{t}$
to simulate the dam starting at any initial time~$t_{i}>0$
from any given initial state~$x_{t_{i}}$. For example starting
at time~$t_{i}=3$ from the initial state~$x_{t_{i}}=5$
and simulating the optimal feedbacks~$\phi\opt_{t}$ along
10,000 scenarios leads to the results given
in Table~\ref{tab:uzawa-later}. As expected, the final
constraint level reached in this last simulation is not
equal to the required level~$\pi=0.9$, which illustrates
that time consistency does not hold true for the problem
formulation with dualized constraint.

\begin{table}[htb]
  \begin{center}
    \begin{tabular}{|l|}
      \hline
      \textbf{Monte Carlo simulation} \\
      \hline\hline
      Monte Carlo cost: $-92.04$      \\
      \hline
      Estimated probability: $0.833$  \\
      \hline
    \end{tabular}
  \end{center}
  \caption{Simulation for the duality method starting
    from~$t_{i}=3$ and~$x_{t_{i}}=5$}
  \label{tab:uzawa-later}
\end{table}


\subsection{Resolution by extended dynamic programming}
\label{Resolution_by_extended_dynamic_programming}

We now use the equivalent formulation of
Problem~\eqref{eq:dam} incorporating an additional
state process $\va{z}=(\va{z}_{0},\ldots,\va{z}_{T})$,
an additional control process
$\va{v}=(\va{v}_{0},\ldots,\va{v}_{T{-}1})$
and an almost sure contraint on the final state. As it
has been explained in~\S\ref{ssec:constraint-equivalent},
the expression of the new dynamics, here 1-dimensional, is
\begin{equation*}
  \va{z}_{0} = z_{0} \eqsepv \quad\;
  \va{z}_{t+1} = \va{z}_{t} + \va{v}_{t} \eqfinv
\end{equation*}
and the form of the final constraint is
\begin{equation*}
  - \findi{\mathbb{R}^{+}}(\va{x}_{T}-\ell) - \va{z}_{T} \leq 0
  \eqfinp
\end{equation*}
The equivalent problem for the case study under consideration is
\begin{subequations}
  \begin{align}
    \min_{\va{u},\va{v},\va{x},\va{z}} \;\;
    & \bgesp{\sum_{t=0}^{T{-}1} - p_{t} \cdot
      \min\ba{\va{u}_{t},\va{x}_{t}+\va{w}_{t+1}-\underline{x}}}
      \label{eq:dam-ext-cost} \eqfinv \\
    \text{s.t. } \quad
    & \va{x}_{0} = x_{0} \eqsepv \;
      \va{x}_{t+1} =
      \min\ba{\overline{x},
      \max\na{\underline{x},\va{x}_{t}-\va{u}_{t}+\va{w}_{t+1}}}
      \label{eq:dam-ext-dynam-x} \eqfinv \\
    & \va{z}_{0} = z_{0} \eqsepv \;\;
      \va{z}_{t+1} = \va{z}_{t} + \va{v}_{t}
      \label{eq:dam-ext-dynam-z} \eqfinv \\
        & \underline{u} \leq \va{u}_{t} \leq \overline{u}
      \label{eq:dam-ext-bounds-u}  \eqfinv \\
    & \sigma\bp{\va{u}_{t}} \subset
      \sigma\bp{
      \va{w}_{1},\dots,\va{w}_t}
      \label{eq:dam-ext-measure-u} \eqfinv \\
    & \sigma\bp{\va{v}_{t}} \subset
      \sigma\bp{
      \va{w}_{1},\dots,\va{w}_{t+1}}
      \label{eq:dam-ext-measure-v} \eqfinv \\
    & \Bespc{\va{v}_{t}}
      {\sigma\bp{
      \va{w}_{1},\dots,\va{w}_t}} = 0
      \label{eq:dam-ext-mart} \eqfinv \\
    & \findi{\mathbb{R}^{+}}(\va{x}_{T}-\ell) + \va{z}_{T} \geq 0
      \eqfinp \label{eq:dam-ext-finl}
  \end{align}
  \label{eq:dam-ext}
\end{subequations}
From Proposition~\ref{pr:equivalence}, we have that
Problems~\eqref{eq:dam} and~\eqref{eq:dam-ext} are
equivalent under the condition
\begin{equation*}
  z_{0} = - \pi \eqfinp
\end{equation*}
Moreover, the special form~\eqref{eq:dam-ext-finl} of the final constraint makes it
possible to bound the variables~$\va{v}_{t}$ and~$\va{~z}_{t}$.
Indeed, from the proof of Proposition~\ref{pr:equivalence},
we deduce from the expression~\eqref{eq:construction-vv}
of the optimal control~$\va{v}_{t}\opt$
with~$g=\findi{\mathbb{R}^{+}}$ that it is sufficient
to search for the control~$\va{v}_{t}$ in~$[-1,1]$.
Moreover, the optimal state~$\va{z}_{t}\opt$ being obtained
by a telescoping sum, it is sufficient to search for
the state~$\va{z}_{t}$ in~$[z_{0}-1,z_{0}+1]$.
Problem~\eqref{eq:dam-ext} can be solved by dynamic programming
with the extended state variable~$(\va{x}_{t},\va{z}_{t})$,
which corresponds to a dynamic programming equation with
a 2-dimensional state variable.
Then, we simulate the dynamics of the dam --- using
the same 10,000 noise trajectories
as those previously used to obtain Table~\ref{tab:uzawa} ---
with the optimal feedbacks given by dynamic programming
with the extended state variable~$(\va{x}_{t},\va{z}_{t})$.
The associated results  are given in Table~\ref{tab:extDP}.
We observe a good adequacy between optimization and
simulation, and we observe that the costs are pretty much
identical between Table~\ref{tab:uzawa} and Table~\ref{tab:extDP}.


\begin{table}[htb]
  \begin{center}
    \begin{tabular}{|l||l|}
      \hline
      \textbf{Extended Dynamic Programming} & \textbf{Monte Carlo Simulation} \\
      \hline\hline
      Bellman value at~$t=0$: $-188.67$     & Monte Carlo cost: $-187.47$     \\
      \hline
      Initial state~$z_{0}$: $-0.900$       & Estimated probability: $0.896$  \\
      \hline
    \end{tabular}
  \end{center}
  \caption{Optimization and simulation for the extended dynamic programming method}
  \label{tab:extDP}
\end{table}

Some simulation trajectories are represented on
Figure~\ref{fig:trajectoires-completeXU-extdp} and
Figure~\ref{fig:trajectoires-completeZV-extdp}.
Figure~\ref{fig:trajectoires-completeXU-extdp} gives the same information
(dam water level and control trajectories over~$[0,T]$) as the
one presented for the duality based algorithm in~\S\ref{Resolution_of_the_problem_with_dualized__expectation_constraint},
whereas Figure~\ref{fig:trajectoires-completeZV-extdp} depicts trajectories
of the optimal state process~$\va{z}$ and the optimal control process~$\va{v}$.
We observe that the results depicted on
Figure~\ref{fig:trajectoires-complete-uzawa} (duality method)
and on Figure~\ref{fig:trajectoires-completeXU-extdp}
(extended dynamic programming method) are very close
(with tiny differences induced by numerical resolution), which illustrates the
equivalence between Problem~\eqref{eq:dam} and Problem~\eqref{eq:dam-ext}.

\begin{figure}[hbtp]
  \begin{center}
    \includegraphics[scale=\figscale]{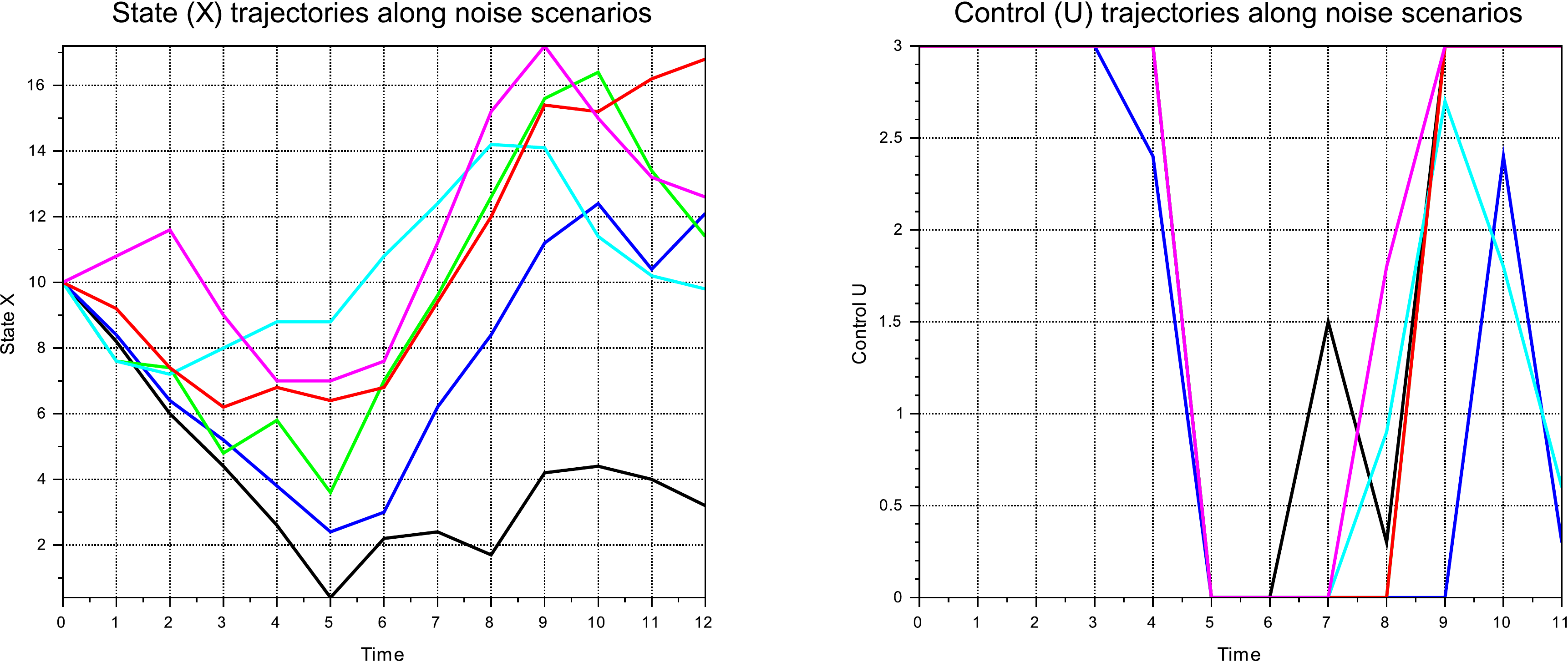}
    \caption{\label{fig:trajectoires-completeXU-extdp}Optimal state~$X$ and control~$U$ trajectories
      over~$[0,T]$ obtained by the extended dynamic programming method}
  \end{center}
\end{figure}
\begin{figure}[hbtp]
  \begin{center}
    \includegraphics[scale=\figscale]{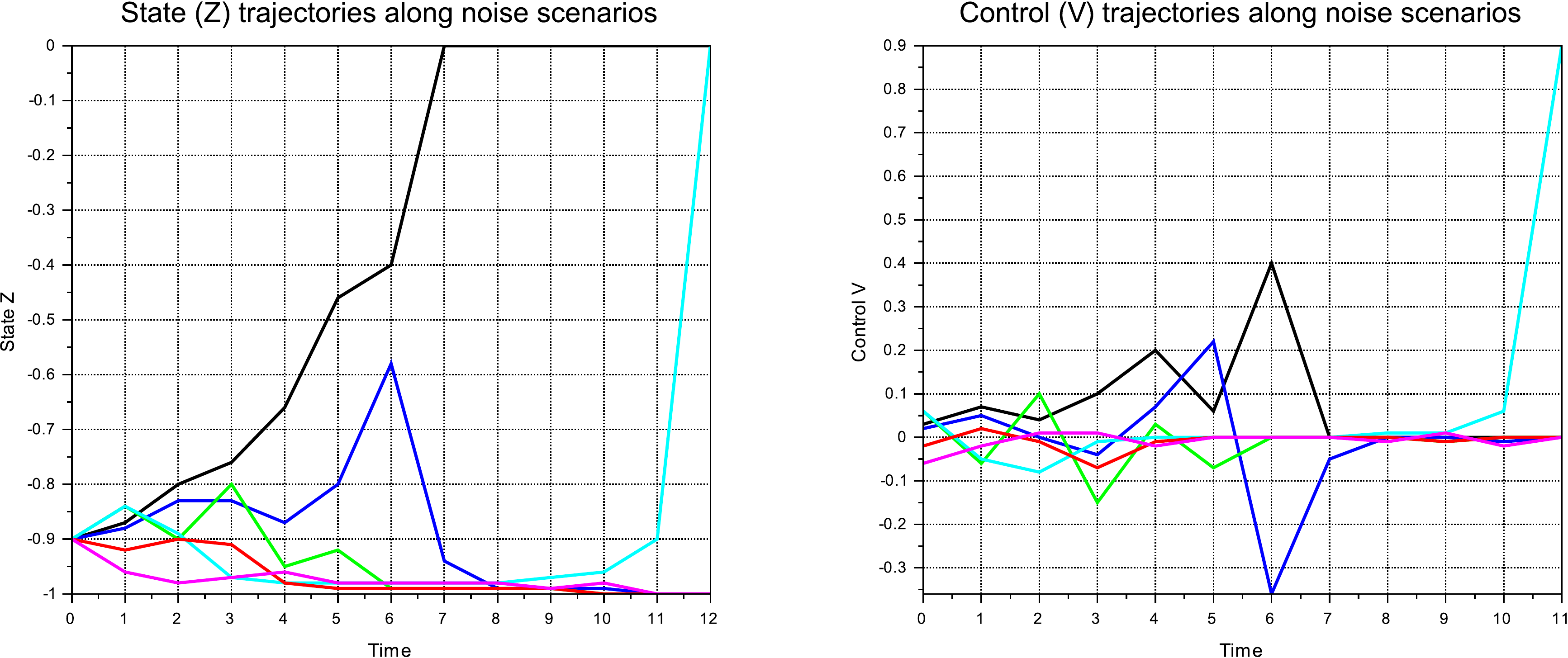}
    \caption{\label{fig:trajectoires-completeZV-extdp}Optimal state~$Z$ and control~$V$ trajectories
      over~$[0,T]$ obtained by the extended dynamic programming method}
  \end{center}
\end{figure}

Finally, we can use the optimal feedbacks obtained
when solving Problem~\eqref{eq:dam-ext} by
dynamic programming  to simulate the dam starting at any
initial time~$t_{i}>0$ from any given initial state
$(x_{t_{i}},z_{t_{i}})$. For example starting at time
$t_{i}=3$ from the initial state~$(x_{t_{i}},z_{t_{i}})=(5,-0.9)$ --- using
the same values than those used in~\S\ref{Resolution_of_the_problem_with_dualized__expectation_constraint} ---
and simulating the optimal feedbacks along 10,000 scenarios
leads to the results given in Table~\ref{tab:extDP-later}:
the final probability level to be reached is by construction
equal to~$\pi=0.9$, and the Monte Carlo simulation induces
a very similar level of probability, which numerically illustrates
that the time consistency property is fulfilled.
The associated simulation trajectories are represented
on Figure~\ref{fig:trajectoires-partielXU-extdp}, using
the same noises trajectories as those that had been used to obtain
Figure~\ref{fig:trajectoires-complete-uzawa}.
\begin{table}[htbp]
  \begin{center}
    \begin{tabular}{|l||l|}
      \hline
      \textbf{Extended Dynamic Programming}       & \textbf{Monte Carlo Simulation} \\
      \hline\hline
      Bellman value at~$t=3$: $-87.78$            & Monte Carlo cost: $-87.71$      \\
      \hline
      Initial state~$(x_{3},z_{3})$: $(5,-0.900)$ & Estimated probability: $0.902$  \\
      \hline
    \end{tabular}
  \end{center}
  \caption{Optimization and simulation for the extended dynamic programming method}
  \label{tab:extDP-later}
\end{table}

\begin{figure}[hbtp]
  \begin{center}
    \includegraphics[scale=\figscale]{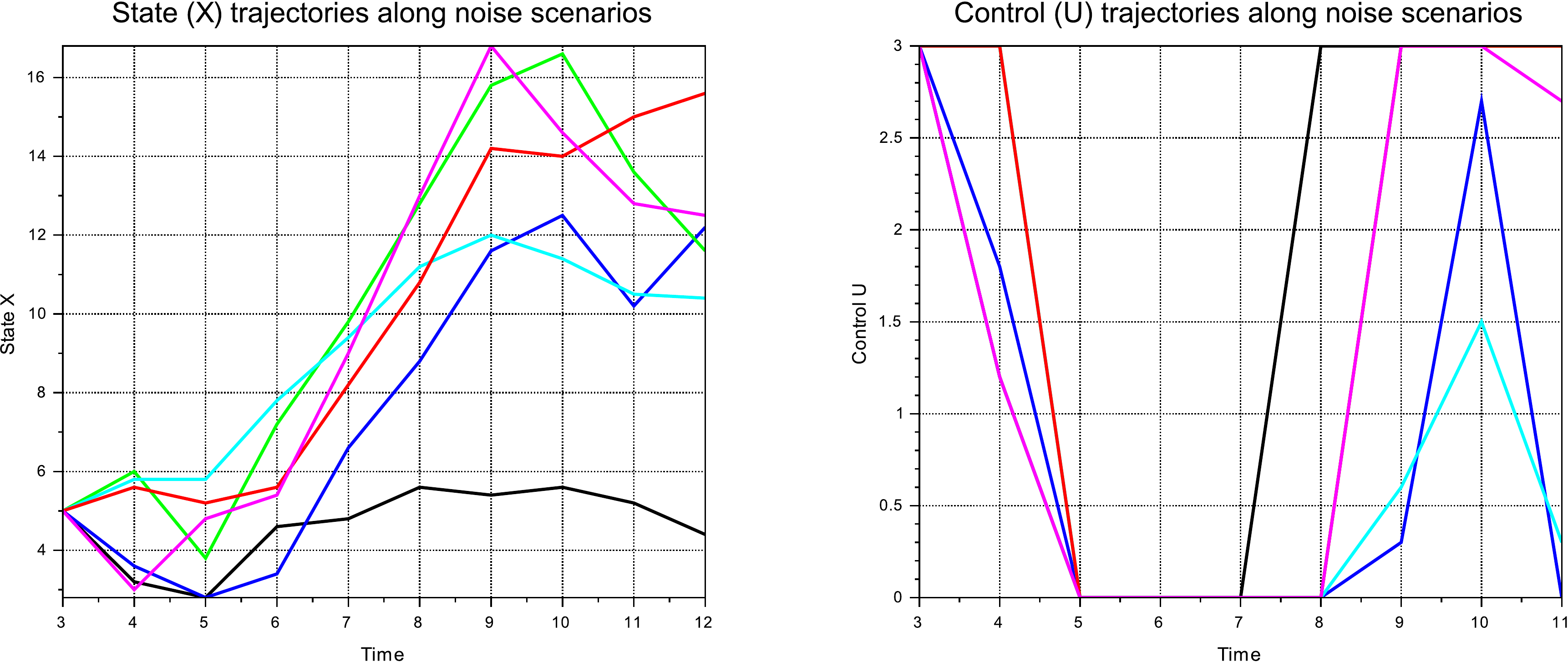}
    \caption{\label{fig:trajectoires-partielXU-extdp}Optimal state~$X$ and control~$U$ trajectories
      over~$[3,T]$ obtained by the extended dynamic programming method}
  \end{center}
\end{figure}

\section{Conclusion}
\label{sect:conclusion}

In this paper, we have proposed a formal definition of time consistency
for families of optimization problems, by introducing the notion of
universal solution. With this, we have shown that --- for the class
of problems where risk is modeled in the form of constraints in
probability or in expectation --- the property of time consistency
depends on the notion of state that one chooses, which must be suited
to the problem studied.
In particular, we have shown that, even if the ``right'' notion of state
for the class of multistage stochastic optimization problems
with a final expectation state constraint was of infinite dimension
(the conditional probability distribution of the state), it is possible to display
a state of finite dimension, so that solving
the problem by dynamic programming becomes conceivable again.

\begin{acknowledgements}
  This paper builds upon results obtained by Pierre Girardeau
  during his PhD thesis \cite{Girardeau_These_2010}, supervised
  by the three authors.
\end{acknowledgements}

\appendix
\section{An extension of Everett's theorem}
\label{ann:everett}

A result due to Everett (see~\cite{Everett_OR_1963}) links the solution of an optimization problem
under constraint and the one of the related dualized optimization
problem. We give here a slight extension, which relaxes an assumption of Everett's theorem.

Let~$\espace{U}$ be a set and let~$U\ad$ be a subset of~$\espace{U}$.
Let \( \Theta : \espace{U} \to \mathbb{R}^{m} \) be a (mulivalued) function.
We deal with the following optimization problem
\begin{equation}
  \label{pb:constrainedproblem}
  J\opt=\min_{u \in U\ad} J(u) \quad \text{s.t.} \quad
  \Theta(u) - b \leq 0 \in \mathbb{R}^{m} \eqfinv
\end{equation}
hence subject to a finite number~$m$ of inequality constraints.

\begin{theorem}
  \label{th:everett}
  Let~$\lambda \in \mathbb{R}^{m}_{+}$ be given.
  We  consider $\overline{u}^{\lambda}$, solution of the optimization problem
\begin{equation}
  \label{pb:relaxedproblem}
  \min_{u \in U\ad} J(u) + \bscal{\lambda}{\Theta(u)} \eqfinv
\end{equation}
and we set $\overline{b}^{\lambda}=\Theta(\overline{u}^{\lambda})$.
We introduce the set~$B^{\lambda} \subset \mathbb{R}^{m} $ defined by
\begin{equation}
  \label{def:Xi-lambda}
  B^{\lambda} = \defset{ b\in\mathbb{R}^{m} }{ 
    b_{i} = \overline{b}^{\lambda}_{i} \text{ if } \lambda_{i} > 0
    \;\text{ and }\;
    b_{i} \geq \overline{b}^{\lambda}_{i} \text{ if } \lambda_{i} = 0
    \eqsepv \forall i=1,\dots,m }
  \eqfinp
\end{equation}

Then,   a solution~$\overline{u}^{\lambda}$ of Problem~\eqref{pb:relaxedproblem}
  is a solution of Problem~\eqref{pb:constrainedproblem}
  for any~$b \in B^{\lambda}$.
\end{theorem}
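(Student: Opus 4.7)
The plan is a direct two-step verification: first establish admissibility of $\overline{u}^{\lambda}$ for every $b \in B^{\lambda}$, then use the optimality of $\overline{u}^{\lambda}$ in the relaxed (penalized) Problem~\eqref{pb:relaxedproblem} together with complementarity-type manipulations on the components of $\lambda$ to rule out any strictly better admissible competitor.

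First I would check admissibility. By definition $\Theta(\overline{u}^{\lambda}) = \overline{b}^{\lambda}$, and the very definition of $B^{\lambda}$ yields $\overline{b}^{\lambda}_{i} \leq b_{i}$ for every index $i$ (with equality whenever $\lambda_{i} > 0$). Hence $\Theta(\overline{u}^{\lambda}) - b \leq 0$, showing that $\overline{u}^{\lambda}$ is admissible for Problem~\eqref{pb:constrainedproblem}.

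Next I would pick any admissible competitor $u \in U\ad$, meaning $\Theta(u) - b \leq 0$, and compare costs. Since $\overline{u}^{\lambda}$ minimizes $J + \bscal{\lambda}{\Theta}$ over $U\ad$, we get
\begin{equation*}
  J(\overline{u}^{\lambda}) - J(u) \leq \bscal{\lambda}{\Theta(u) - \Theta(\overline{u}^{\lambda})}
  = \bscal{\lambda}{\Theta(u) - \overline{b}^{\lambda}}
  \eqfinp
\end{equation*}
It remains to show the right-hand side is nonpositive. I split the sum index-by-index. For indices $i$ with $\lambda_{i} > 0$, the definition of $B^{\lambda}$ gives $b_{i} = \overline{b}^{\lambda}_{i}$, and admissibility of $u$ yields $\Theta_{i}(u) \leq b_{i} = \overline{b}^{\lambda}_{i}$, so $\lambda_{i}\bp{\Theta_{i}(u) - \overline{b}^{\lambda}_{i}} \leq 0$. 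For indices $i$ with $\lambda_{i} = 0$, the corresponding term vanishes. Summing gives $\bscal{\lambda}{\Theta(u) - \overline{b}^{\lambda}} \leq 0$, hence $J(\overline{u}^{\lambda}) \leq J(u)$, which establishes the optimality of $\overline{u}^{\lambda}$ for Problem~\eqref{pb:constrainedproblem} with right-hand side $b$.

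I do not expect any genuine obstacle: the argument is the classical Everett trick, and the ``extension'' is entirely captured by the case distinction on the sign of the components of $\lambda$ in the definition of $B^{\lambda}$. The only delicate point is to notice that relaxing $b_{i}$ upward when $\lambda_{i} = 0$ neither destroys admissibility (since $\Theta_{i}(\overline{u}^{\lambda}) = \overline{b}^{\lambda}_{i} \leq b_{i}$) nor spoils the inequality above (since the corresponding term in $\bscal{\lambda}{\cdot}$ is zero), which is precisely why one can weaken the usual equality $b = \overline{b}^{\lambda}$ of Everett's theorem to the one-sided condition defining $B^{\lambda}$.
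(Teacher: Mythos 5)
Your proof is correct and is essentially the paper's own argument (the classical Everett trick): admissibility follows from $\Theta(\overline{u}^{\lambda})=\overline{b}^{\lambda}\leq b$, and optimality from the fact that $\bscal{\lambda}{\Theta(u)-b}\leq 0$ for any admissible competitor while $\bscal{\lambda}{b-\overline{b}^{\lambda}}=0$ by the definition of $B^{\lambda}$. The only cosmetic difference is that you verify the final inequality by a direct index-by-index comparison with an arbitrary admissible $u$, whereas the paper packages the same step as a weak-duality chain $\sup_{\mu}\min_{u}\leq\inf_{u}\sup_{\mu}$; the two are equivalent here.
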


\begin{proof}
  Let~$\overline{u}^{\lambda}$ be a solution of Problem~\eqref{pb:relaxedproblem}
  and let~$b \in B^{\lambda}$.
  We have
  \begin{align*}
    J(\overline{u}^{\lambda})
    & = \min_{u\in U\ad} J(u) + \bscal{\lambda}{\Theta(u)-\Theta(\overline{u}^{\lambda})}
      \tag{by definition of $\overline{u}^{\lambda}$} \eqfinv \\
    & = \min_{u\in U\ad} J(u) + \bscal{\lambda}{\Theta(u)-\overline{b}^{\lambda}}
      \tag{by definition of $\overline{b}^{\lambda}=\Theta(\overline{u}^{\lambda})$} \eqfinv \\
    & = \min_{u\in U\ad} J(u) + \bscal{\lambda}{\Theta(u)-b}
      \tag{by definition of~$B^{\lambda}$ in~\eqref{def:Xi-lambda}} \eqfinv \\
    & \leq \sup_{\mu \geq 0} \min_{u\in U\ad}
      J(u) + \bscal{\mu}{\Theta(u)-b}
      \tag{as $\lambda \geq 0$}
      \eqfinv \\
    & \leq \inf_{u\in U\ad} \sup_{\mu \geq 0}
      J(u) + \bscal{\mu}{\Theta(u)-b}
      \tag{by weak duality} \eqfinv \\
    & = J\opt \tag{by~\eqref{pb:constrainedproblem}}
      \eqfinp
  \end{align*}
  Since~$\overline{u}^{\lambda}$ is
  such that~$\Theta(\overline{u}^{\lambda}) = \overline{b}^{\lambda} \leq b$
  by definition of the set~$B^{\lambda}$ in~\eqref{def:Xi-lambda}, we deduce that~$\overline{u}^{\lambda}$
  is admissible for Problem~\eqref{pb:constrainedproblem}, and hence
  is an optimal solution of this problem.
\end{proof}

\section{Dynamic programming for the optimization problem
  involving martingale-type constraints}
\label{ann:progdyn-ext}

We prove in~\S\ref{sec:dp} that  Problem~\eqref{pb:constraint-equiv}
can be solved by dynamic programming under the additional assumption
that, for any time~$t$ in $\ic{0,T}$, the random variable~$\va{w}_{t}$
can take only a finite number of values. The proof is based
on a so-called interchange (between minimization and integration) lemma given
in \S\ref{sec:interchange}.

\subsection{An interchange Lemma}
\label{sec:interchange}

\begin{lemma}
  \label{le:interchange}
Let~$\espace{Y}$, $\espace{U}$, $\espace{V}$, $\espace{W}'$
and~$\espace{W}''$ be measurable spaces, and let
$\varphi : \espace{Y}\times\espace{U}\times\espace{V}\times\espace{W}''
\to \RR_+ \cup\na{+\infty}$ be a measurable extended real function.
Let~$(\omeg, \trib, \prbt)$ be a probability space\footnote{%
All random variables are defined
on~$(\omeg, \trib, \prbt)$, and we denote them using bold letters.}.
Given three random variables~$\va{y}$, $\va{w}'$ and $\va{w}''$ taking
values in~$\espace{Y}$, $\espace{W}'$ and~$\espace{W}''$ respectively,
we consider the optimization problem~${\cal P}$ defined by
\begin{subequations}
  \label{eq:pb-P}
  \begin{align}
    \Pbvalue{P}{Y}=
    \inf_{(\va{u},\va{v})} \;
    & \besp{\varphi(\va{y},\va{u},\va{v},\va{w}'')} \\
    & \text{s.t.} \quad \sigma\np{\va{u}} \subset \sigma\np{\va{w}'} \eqsepv
                        \sigma\np{\va{v}} \subset \sigma\np{\va{w}',\va{w}''}
      \label{eq:pb-P-mes} \eqfinv \\
    & \hphantom{\text{s.t.} \quad} \va{v} \text{ is integrable and }
      \bespc{\va{v}}{\sigma\np{\va{w}'}}= 0 \eqfinv
  \end{align}
\end{subequations}
where the minimization is done over couples of random variables
\( \va{u} : \omeg \to \espace{U}\) and
\( \va{v} : \omeg \to \espace{V}\).


  We define the function $\psi: \espace{Y}\to \RR_+ \cup\na{+\infty} $ by
  \begin{align}
    \psi : \espace{Y}\ni y \mapsto
    &
      \inf_{{(u,\va{v})}}
      \besp{\varphi(y,u,\va{v},\va{w}'')}
    \label{eq:Psidef}\\
    &\quad\text{s.t.}\quad \sigma\np{\va{v}} \subset \sigma\np{\va{w}''}
      \eqfinv
    \nonumber \\
    & \hphantom{\quad\text{s.t.}\quad} \va{v} \text{ is integrable and } \besp{\va{v}}= 0
      \nonumber  \eqfinp
  \end{align}
  where the minimization is done over variables \( u \in \espace{U}\)
and random variables \( \va{v} : \omeg \to \espace{V}\).

  We suppose that the two random
  variables $\va{w}'$ and $\va{w}''$ are independent, that they
  each take a finite number of values, and that the
  random variable~$\va{y}$ is $\sigma\np{\va{w}'}$-measurable, that is,
  \( \sigma\np{\va{y}}  \subset \sigma\np{\va{w}'} \).
  Then, the optimal value~$\Pbvalue{P}{Y}$ of Problem~${\cal P}$
  satisfies the following interchange formula 
    \begin{equation}
    \label{eq:interchange}
      \Pbvalue{P}{Y}
      = \besp{\psi(\va{y})}
      \eqfinp
    \end{equation}

  \end{lemma}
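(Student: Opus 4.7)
The strategy is to exploit the finite-valuedness of $\va{w}'$ to atomize Problem~${\cal P}$ into a finite collection of decoupled subproblems, one per atom of $\sigma\np{\va{w}'}$. First, I would enumerate the values of $\va{w}'$ as $w'_0,\dots,w'_{N'}$ with probabilities $\pi'_i = \bprob{\va{w}'=w'_i} > 0$. Since $\va{y}$ is $\sigma\np{\va{w}'}$-measurable, there exist constants $y_i \in \espace{Y}$ such that $\va{y} = \sum_{i=0}^{N'} y_i \findi{w'_i}(\va{w}')$. Any feasible $\va{u}$ for Problem~${\cal P}$ decomposes similarly as $\va{u} = \sum_i u_i \findi{w'_i}(\va{w}')$ with $u_i \in \espace{U}$, and any feasible $\va{v}$ decomposes as $\va{v} = \sum_i \va{v}_i \findi{w'_i}(\va{w}')$ where each $\va{v}_i$ is integrable and $\sigma\np{\va{w}''}$-measurable; conversely, any such data $(u_i,\va{v}_i)_i$ define a pair satisfying the measurability constraints~\eqref{eq:pb-P-mes}.

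Next, I would translate the martingale-type constraint and the criterion into atomic form using the independence of $\va{w}'$ and $\va{w}''$. The constraint $\bespc{\va{v}}{\sigma\np{\va{w}'}} = 0$ becomes $\besp{\va{v}_i} = 0$ for every $i$, because $\va{v}_i$ depends only on $\va{w}''$, which is independent of $\va{w}'$. The cost then expands as
\begin{equation*}
\besp{\varphi(\va{y},\va{u},\va{v},\va{w}'')}
= \sum_{i=0}^{N'} \besp{\findi{w'_i}(\va{w}') \,\varphi(y_i,u_i,\va{v}_i,\va{w}'')}
= \sum_{i=0}^{N'} \pi'_i \besp{\varphi(y_i,u_i,\va{v}_i,\va{w}'')}
\eqfinv
\end{equation*}
again by independence.

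Finally, since the constraints on the pairs $(u_i,\va{v}_i)$ are decoupled across $i$, the infimum of the sum equals the sum of the infima, and each inner infimum is precisely $\psi(y_i)$ by the definition~\eqref{eq:Psidef} of $\psi$. This yields
\begin{equation*}
\Pbvalue{P}{Y} = \sum_{i=0}^{N'} \pi'_i \psi(y_i) = \besp{\psi(\va{y})}
\eqfinp
\end{equation*}
The lower bound is immediate: any globally feasible $(\va{u},\va{v})$ projects to feasible pairs $(u_i,\va{v}_i)$ for each $\psi(y_i)$, and summing gives the inequality. The upper bound uses an $\varepsilon$-gluing: pick, for each $i$, a pair $(u_i^{\varepsilon},\va{v}_i^{\varepsilon})$ within $\varepsilon$ of $\psi(y_i)$, and combine them through the finitely many indicator functions $\findi{w'_i}(\va{w}')$ to build an admissible pair for Problem~${\cal P}$. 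The main delicate point I anticipate is ensuring this gluing is measurable without invoking measurable-selection theorems; this is automatic because $\va{w}'$ takes only finitely many values. The hypothesis that $\va{w}''$ also takes finitely many values plays the analogous role in guaranteeing that each $\psi(y_i)$ is itself a well-defined infimum over a finite-dimensional set of parameters.
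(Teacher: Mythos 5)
Your proof is correct and follows essentially the same route as the paper's: atomize over the finitely many values of $\va{w}'$, use independence of $\va{w}'$ and $\va{w}''$ to turn the conditional-expectation constraint into $\besp{\va{v}_i}=0$ per atom and to factor the cost into a weighted sum, then observe that the decoupled subproblems are exactly the $\psi(y_i)$. The only cosmetic difference is that the paper also discretizes $\va{w}''$ (writing $\va{v}$ via coefficients $v_{i,j}$), whereas you keep each $\va{v}_i$ as a $\sigma(\va{w}'')$-measurable random variable; both are valid, and your explicit two-sided argument (projection for the lower bound, $\varepsilon$-gluing over finitely many atoms for the upper bound) covers the interchange of infimum and sum that the paper states more tersely.
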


  \begin{proof}
    Letting $\na{w'_{i}}_{i\in \ic{0,N'}}$ and $ \na{w''_{i}}_{i\in \ic{0,N''}}$
be the sets of values taken by, respectively, the random variables $\va{w}'$
and $\va{w}''$, 
we denote
\begin{align}
  \va{w}': \Omega \to \na{w'_{i}}_{i\in \ic{0,N'}}
  &\quad\text{with}\quad \bprob{\na{\va{w}= w'_i}} = \pi'_i\eqsepv \forall i  \in \ic{0,N'}
    \label{eq:Wprimediscret} \eqfinv
  \\
  \va{w}'': \Omega \to \na{w''_{i}}_{i\in \ic{0,N''}}
  &\quad\text{with}\quad \bprob{\na{\va{w}= w''_i}} = \pi''_i\eqsepv \forall i  \in \ic{0,N''}
    \label{eq:Wprimeprimediscret} \eqfinp
\end{align}
Now, since~$\va{y}$ is a~$\sigma(\va{w}')$-measurable
random variable and from the measurability constraints on the random variables
$\va{u}$ and $\va{v}$ in Equation~\eqref{eq:pb-P-mes}, we can represent
these random variables as follows:
\begin{align}
  \va{y} = \sum_{i=0}^{N'} y_i \findi{w'_i}(\va{w}')
  \eqsepv
  \va{u} = \sum_{i=0}^{N'} u_i \findi{w'_i}(\va{w}')
  \eqsepv
  \va{v} = \sum_{i=0}^{N'}\sum_{j=0}^{N''} v_{i,j} \findi{w'_i}(\va{w}')\findi{w''_j}(\va{w}'')
  \eqfinp
  \label{eq:representation}
\end{align}
We have just expressed the fact that the set
of $\sigma\np{\va{w}}$-measurable random variables taking values in
a set ${\mathbb F}$ is in bijection with the product space ${\mathbb F}^{N}$
if the random variable $\va{w}$ takes $N$ different values.

We start the proof by using Equation~\eqref{eq:representation} to establish the following equalities
\begin{subequations}
  \label{eq:discrete-equations}
  \begin{align}
    \bespc{\varphi(\va{y}, \va{u},\va{v},\va{w}'')}{\va{w}'}
    &=
      \sumi{\Bp{\pisumj{\varphi\Bp{y_i, u_i,v_{i,j},w''_j}}}}
      \eqfinv
      \label{eq:EspcondPhi}\\
    \besp{\varphi(\va{y},\va{u},\va{v},\va{w}'')}
    &=
        \pisumi{\Bp{\pisumj{\varphi\Bp{y_i, u_i,v_{i,j},w''_j}}}}
      \eqfinv
      \label{eq:EspPhi}\\
    \bespc{\va{v}}{\va{w}'}
    &=
      \sumi{\bgp{\pisumj{v_{i,j}}}}
      \label{eq:EspcondV}\eqfinp
  \end{align}
\end{subequations}
All the manipulations below are easy to check, and are justified
 because all quantities take extended nonnegative values.

$\bullet$ For Equation~\eqref{eq:EspcondPhi}:
  \begin{subequations}
    \begin{align*}
      \EE\big(
      &\varphi(
        \va{y},\va{u},\va{v},\va{w}'')\;\big\vert\; {\va{w}'} \big) \\
      & = \bgespc{\varphi\Bp{\sumi{y_i},\sumi{u_i},\sumij{v_{i,j}},\va{w}''}}{\va{w}'}
        \tag{by~\eqref{eq:representation}} \\
      &=
        \bgespc{\sumi{\varphi\Bp{y_i, u_i, \sumj{v_{i,j}},\va{w}''}}}{\va{w}'}
      \\
      &=
        \sumi{
        \bgespc{\varphi\Bp{y_i, u_i,\sumj{v_{i,j}}, \va{w}''}}{\va{w}'}
        }
      \\
      &=
        \sumi{
        \bgesp{\varphi\Bp{y_i, u_i,\sumj{v_{i,j}}, \va{w}''}}
        }
        \tag{as $\va{w}'$ and $\va{w}''$ are independent}
      \\
      &=
        \sumi{
        \bgesp{\sumj{\varphi\Bp{y_i, u_i,v_{i,j},w''_j}}}
        }
      \\
      &=
        \sumi{\Bp{\pisumj{\varphi\Bp{y_i, u_i,v_{i,j},w''_j}}}}
        \tag{by~\eqref{eq:Wprimeprimediscret}} \eqfinp
    \end{align*}
  \end{subequations}

  $\bullet$ For Equation~\eqref{eq:EspPhi}:
  \begin{subequations}
    \begin{align*}
      \besp{\varphi(\va{y},\va{u},\va{v},\va{w}'')}
      &=
        \Besp{\bespc{\varphi(\va{y},\va{u},\va{v},\va{w}'')}{\va{w}'}}
      \\
      &=
        \Besp{
        \sumi{\Bp{\pisumj{\varphi\Bp{y_i, u_i,v_{i,j},w''_j}}}}
        }
        \tag{by~\eqref{eq:EspcondPhi}} \\
      &=
        \pisumi{\Bp{\pisumj{\varphi\Bp{y_i, u_i,v_{i,j},w''_j}}}}
        \tag{by~\eqref{eq:Wprimediscret}} \eqfinp
    \end{align*}
  \end{subequations}

  $\bullet$ For Equation~\eqref{eq:EspcondV}:
  \begin{subequations}
    \begin{align*}
      \bespc{\va{v}}{\va{w}'}
      &=
        \bgespc{\sumij{v_{i,j}}}{\va{w}'}
        \tag{by~\eqref{eq:representation}} \\
      &=
        \sumi{\bgespc{\sumj{v_{i,j}}}{\va{w}'}}
      \\
      &=
        \sumi{\bgesp{\sumj{v_{i,j}}}}
        \tag{as $\va{w}'$ and $\va{w}''$ are independent}
      \\
      &=
        \sumi{\bgp{\pisumj{v_{i,j}}}}
        \tag{by~\eqref{eq:Wprimeprimediscret}} \eqfinp
    \end{align*}
  \end{subequations}

  Using Equations~\eqref{eq:representation} and~\eqref{eq:EspcondV},
  for any $\sigma\np{\va{W}''}$-measurable random variable $\va{V}$,
  we have the equivalence
  \begin{equation}
  \label{eq:contr-espcondv}
    \bespc{\va{v}}{\va{w}'} = 0
    \iff
    \pisumj{v_{i,j}} = 0\eqsepv \; \forall i \in \ic{0,N'}
    \eqfinp
  \end{equation}
  Using again Equations~\eqref{eq:representation}
  and~\eqref{eq:discrete-equations}, we obtain that
  the optimization Problem~\eqref{eq:pb-P} is equivalent
  to the following optimization problem
  \begin{subequations}
    \label{pb:discrete}
    \begin{align}
    \inf_{\substack{\na{u_i}_{i\in \ic{0,N'}} \\
                        \na{v_{i,j}}_{i\in \ic{0,N'}, j \in \ic{0,N''}}}}
    & \sum_{i=0}^{N'}
      \pi'_i
      \Bp{\sum_{j=0}^{N''} \pi''_j \varphi\np{y_i, u_i, v_{i,j}, w''_i}}
    \\
    & \text{s.t.} \quad
      {\sum_{j=0}^{N''} \pi''_j v_{i,j} = 0\eqsepv \; \forall i \in \ic{0,N'}}
      \eqfinp
    \end{align}
  \end{subequations}
The optimization problem~\eqref{pb:discrete} trivially splits into
a family~$\na{{\cal P}_i}_{i\in\ic{0,N'}}$ of $N'$ independent
optimization problems, Problem~${\cal P}_i$ being defined by
\begin{subequations}
  \begin{align}
    \Pbvalueter{{\cal P}}{y_i}=
    & \inf_{(u_i,\na{v_{i,j}}_{j \in \ic{0,N''}})}
      {\sum_{j=0}^{N''} \pi''_j \varphi\np{y_i, u_i, v_{i,j}, w''_i}}
 \\
    & \text{s.t.}\quad
      \sum_{j=0}^{N''} \pi''_j v_{i,j} = 0 \eqsepv
  \end{align}
  \label{eq:Psidef_finite}
\end{subequations}
and the value of Problem~\eqref{pb:discrete} is the weighted sum
of the values of the family of problems $\na{{\cal P}_i}_{i \in \ic{0,N'}}$:
\begin{equation*}
\Pbvalue{P}{Y}=\sum_{i=0}^{N'} \pi'_i \Pbvalueter{{\cal P}}{y_i} \eqfinp
\end{equation*}
We notice that \( \Pbvalueter{{\cal P}}{y_i} \) in~\eqref{eq:Psidef_finite} is
exactly \( \psi\np{y_i} \) in~\eqref{eq:Psidef}, so that the above equation
gives~\eqref{eq:interchange}.
\end{proof}

\subsection{Proof of Theorem~\ref{th:constraint-equivBellman}}
\label{sec:dp}

\begin{proof}
  For any $\tau \in \ic{0,T}$ we consider the minimization Problem~${\cal P}_{\tau}$
  defined by\footnote{%
    For $\tau=0$, the value of Problem~${\cal P}_{0}$ is simply
    \( \mathscr{V}_{0}({x}_{0},{z}_{0}) \).}
  \begin{subequations}
    \label{pb:constraint-equiv-tau}
    \begin{align}
      \min_{(\va{u},\va{v},\va{x},\va{z})}
      &
        \bgesp{\sum_{t=0}^{\tau{-}1}
        L_{t}(\va{x}_{t},\va{u}_{t},\va{w}_{t+1})+
        \mathscr{V}_{\tau}(\va{x}_{\tau},\va{z}_{\tau})}
        \eqfinv \label{pb:constraint-equiv-tau-cost}
      \\
      & \text{s.t.}\; \va{x}_{0} = x_{0} \eqsepv \va{z}_{0} = z_{0} \eqsepv
        \label{pb:constraint-equiv-tau-init}    \\
      &\text{and, for all $t \in\ic{0,\tau-1}$,}
        \nonumber \\
      &\hphantom{\text{and}\;\forall t}
        \va{x}_{t+1} = f_{t}(\va{x}_{t},\va{u}_{t},\va{w}_{t+1})
        \eqsepv
        \va{z}_{t+1} = \va{z}_{t} + \va{v}_{t}
        \eqfinv \label{pb:constraint-equiv-tau-dyn}
      \\
      &\hphantom{\text{and}\;\forall t}
        \sigma\bp{\va{u}_{t}} \subset
        \sigma\bp{
        \va{w}_{1},\dots,\va{w}_t}
        \eqsepv
        \sigma\bp{\va{v}_{t}} \subset
        \sigma\bp{
        \va{w}_{1},\dots,\va{w}_{t+1}}
        \eqfinv \label{pb:constraint-equiv-tau-mes}
      \\
      &\hphantom{\text{and}\;\forall t}
        \text{$\va{v}_{t}$ is integrable and } \;
        \bespc{\va{v}_{t}}
        {{
        \va{w}_{1},\dots,\va{w}_t}} = 0
        \eqfinv  \label{pb:constraint-equiv-tau-mart}
    \end{align}
  where the sequence $\ba{\mathscr{V}_\tau}_{\tau\in\ic{0,T}}$ of value
  functions, with \( \mathscr{V}_\tau : \espacea{X}_\tau \times\espacea{Z}_\tau \to \RR_+ \cup\na{+\infty} \),
  appearing in the cost function~\eqref{pb:constraint-equiv-tau-cost}
  is given by the Bellman recursion~\eqref{eq:constraint-equivBellman}.
  To simplify the notation, we denote by $\Lambda_{\tau}$ the set
  of random variables $\np{\va{u}_t,\va{v}_t}_{t\in \ic{0,\tau}}$ and
  $\np{\va{x}_t,\va{z}_t}_{t\in \ic{0,\tau+1}}$ satisfying the constraints
  \eqref{pb:constraint-equiv-tau-init} -- \eqref{pb:constraint-equiv-tau-dyn} --
  \eqref{pb:constraint-equiv-tau-mes} -- \eqref{pb:constraint-equiv-tau-mart}.
  We recall that, by Equation~\eqref{eq:sigma-field-t}, $\tribu{F}_{\tau}$
  represents the $\sigma$-field generated by $\np{\va{w}_1,\ldots,\va{w}_{\tau}}$
  for all $\tau \in \ic{1,T}$.
  \end{subequations}

  We are now going to prove, by backward induction,
  that the value of Problem~\eqref{pb:constraint-equiv} with~$t_{0}=0$ is equal
  to the value of Problem ${\cal P}_{\tau}$ in~\eqref{pb:constraint-equiv-tau} for any $\tau \in \ic{0,T}$.

  First, the value of Problem~\eqref{pb:constraint-equiv}, with~$t_{0}=0$, is equal
  to the value of Problem ${\cal P}_{\tau}$ in~\eqref{pb:constraint-equiv-tau} for $\tau=T$.
  Indeed, the criterion~\eqref{pb:constraint-equiv-tau-cost} in
  Problem~\eqref{pb:constraint-equiv-tau}, satisfies, for~$\tau=T$,
  \begin{align*}
   &  \bgesp{\sum_{t=0}^{T{-}1}
        L_{t}(\va{x}_{t},\va{u}_{t},\va{w}_{t+1})+
     \mathscr{V}_{T}(\va{x}_{T},\va{z}_{T})}
     \\
    &=
     \bgesp{\sum_{t=0}^{T{-}1}
      L_{t}(\va{x}_{t},\va{u}_{t},\va{w}_{t+1})+
      \bp{K(x) + \fcara{\na{g(x) - z \leq 0}}}(\va{x}_{T},\va{z}_{T})}
      \tag{as~$\mathscr{V}_{T}$ is defined by~\eqref{eq:constraint-equivBellman-T}}
    \\
    &=
     \bgesp{\sum_{t=0}^{T{-}1}
      L_{t}(\va{x}_{t},\va{u}_{t},\va{w}_{t+1})+ K(\va{x}_{T}) }
      \mtext{ under the almost sure final constraint}
    \\
    &\hphantom{\bgesp{\sum_{t=0}^{T{-}1}
      L_{t}(\va{x}_{t},\va{u}_{t},\va{w}_{t+1})+ K(\va{x}_{T}) }
      \mtext{ under the}}
      g(\va{x}_{T})-\va{z}_{T} \leq 0
      \eqfinp
  \end{align*}
Thus, we obtain that
  Problem~\eqref{pb:constraint-equiv-tau} for~$\tau=T$ is
  the same as Problem~\eqref{pb:constraint-equiv} with~$t_{0}=0$,
  the only difference being that
  the almost sure final constraint~\eqref{pb:constraint-equiv-finl}
  has been moved in the final cost in~\eqref{pb:constraint-equiv-tau-cost}.

Second, we prove by backward induction that
the value of Problem~\eqref{pb:constraint-equiv} is equal
  to the value of Problem ${\cal P}_{\tau}$ for any $\tau \in \ic{0,T}$.
  For this purpose, assuming that the value of Problem~\eqref{pb:constraint-equiv} is equal
  to the value of Problem ${\cal P}_{\tau+1}$, we prove that it is also
  equal to the value of Problem ${\cal P}_{\tau}$.
  We immediately get that
    \begin{align}
      &
        \min_{\bp{\np{\va{u}_t,\va{v}_t}_{t\in \ic{0,\tau}},
                  \np{\va{x}_t,\va{z}_t}_{t\in \ic{0,\tau+1}}} \in \Lambda_{\tau}}
        \bgesp{\sum_{t=0}^{\tau}
        L_{t}(\va{x}_{t},\va{u}_{t},\va{w}_{t+1})+
        \mathscr{V}_{\tau+1}(\va{x}_{\tau+1},\va{z}_{\tau+1})}
        \nonumber
      \\
      &\quad=
        \min_{\bp{\np{\va{u}_t,\va{v}_t}_{t\in \ic{0,\tau-1}},
                  \np{\va{x}_t,\va{z}_t}_{t\in \ic{0,\tau}}} \in \Lambda_{\tau-1}}
        \Bigg(
        \bgesp{\sum_{t=0}^{\tau-1}
        L_{t}(\va{x}_{t},\va{u}_{t},\va{w}_{t+1})}
        \label{eq:step-1} 
      \\
      &\hphantom{\quad=\quad}
        + \min_{
        \substack{ \np{\va{u}_{\tau},\va{v}_{\tau}} \\
      \sigma\np{\va{u}_{\tau}} \subset \tribu{F}_{\tau} \\ 
      \sigma\np{\va{v}_{\tau}} \subset \tribu{F}_{\tau+1} \\ 
      \nespc{\va{v}_{\tau}}{\tribu{F}_{\tau}}=0}} 
      \bgesp{
         L_{\tau}(\va{x}_{\tau},\va{u}_{\tau},\va{w}_{\tau+1})+
        \mathscr{V}_{\tau+1}\bp{f_{\tau}(\va{x}_{\tau},\va{u}_{\tau},\va{w}_{\tau+1}),\va{z}_{\tau}+\va{v}_{\tau}}}
      \Bigg)
      \eqfinv
      \nonumber
    \end{align}
    because all quantities take extended nonnegative values.

    Now, we apply Lemma~\ref{le:interchange}
    to the inner minimization, with $\va{y}=(\va{x}_{\tau},\va{z}_{\tau})$,
    $\va{w}'=(\va{w}_1,\ldots,\va{w}_{\tau})$, $\va{w}''=\va{w}_{\tau+1}$ and
    with the function $\varphi\bp{(x,z),u,v,w''}=L_{\tau}(x,u,w'')$ $+
    \mathscr{V}_{\tau+1}\big(f_{\tau}(x,u,w'')$, $z+ v\big)$ and deduce that
    \begin{align}
      \min_{
      \substack{ \np{\va{u}_{\tau},\va{v}_{\tau}} \\
      \sigma\np{\va{u}_{\tau}} \subset \tribu{F}_{\tau} \\
      \sigma\np{\va{v}_{\tau}} \subset \tribu{F}_{\tau+1} \\
      \nespc{\va{v}_{\tau}}{\tribu{F}_{\tau}}=0}}
      \Besp{
      L_{\tau}(\va{x}_{\tau},\va{u}_{\tau},\va{w}_{\tau+1})+
      \mathscr{V}_{\tau+1}\bp{f_{\tau}(\va{x}_{\tau},\va{u}_{\tau},\va{w}_{\tau+1}),
                              \va{z}_{\tau}+\va{v}_{\tau}}}
      =
      \besp{ \mathscr{V}_{\tau}\np{\va{x}_{\tau},\va{z}_{\tau}}}
      \eqfinv \nonumber
    \end{align}
    because~$\psi(y)$ in~\eqref{eq:Psidef} is exactly
    $\mathscr{V}_{\tau}\np{x,z}$ in~\eqref{eq:constraint-equivBellman-t}.
  Combined with Equation~\eqref{eq:step-1}, this leads to
  \begin{align}
    &
      \min_{
      \bp{\np{\va{u}_t,\va{v}_t}_{t\in \ic{0,\tau}},
                  \np{\va{x}_t,\va{z}_t}_{t\in \ic{0,\tau+1}}}\in \Lambda_{\tau}}
      \bgesp{\sum_{t=0}^{\tau}
      L_{t}(\va{x}_{t},\va{u}_{t},\va{w}_{t+1})+
      \mathscr{V}_{\tau+1}(\va{x}_{\tau+1},\va{z}_{\tau+1})}
      \nonumber
    \\
    &\hphantom{\quad}
     = \min_{\bp{\np{\va{u}_t,\va{v}_t}_{t\in \ic{0,\tau-1}},
                  \np{\va{x}_t,\va{z}_t}_{t\in \ic{0,\tau}}} \in \Lambda_{\tau-1}}
      \bgesp{\sum_{t=0}^{\tau-1}
      L_{t}(\va{x}_{t},\va{u}_{t},\va{w}_{t+1}) +
      \mathscr{V}_{\tau}\bp{\va{x}_{\tau},\va{z}_{\tau}}}
      \eqfinp
      \nonumber
  \end{align}
  We conclude that the value of Problem~\eqref{pb:constraint-equiv}
  is equal to the value of Problem ${\cal P}_{\tau}$, so that we have
  by induction that the value of Problem~\eqref{pb:constraint-equiv}
  is equal to the value of Problem ${\cal P}_{\tau}$
  in~\eqref{pb:constraint-equiv-tau} for any $\tau \in \ic{0,T}$.

  The value of Problem~\eqref{pb:constraint-equiv} is thus equal to
  the value of problem ${\cal P}_{0}$, namely $\mathscr{V}_{0}(x_0,z_0)$,
  and can therefore be computed by using the dynamic programming
  equation~\eqref{eq:constraint-equivBellman}.
\end{proof}

\newcommand{\noopsort}[1]{} \ifx\undefined\allcaps\def\allcaps#1{#1}\fi


\begin{thebibliography}{10}

\bibitem{Artzner_AOR_2007}
P.~Artzner, F.~Delbaen, J.-M. Eber, D.~Heath, and H.~Ku.
\newblock Coherent multiperiod risk-adjusted values and {B}ellman's principle.
\newblock {\em Annals of Operations Research}, 152(1):5--22, July 2007.

\bibitem{Bertsekas_AS_2005}
D.~P. Bertsekas.
\newblock {\em Dynamic Programming and Optimal Control, Vol. I}.
\newblock Athena Scientific, Belmont, Massachusets, second edition, 2005.

\bibitem{Bertsekas-Shreve:1996}
D.~P. Bertsekas and S.~E. Shreve.
\newblock {\em Stochastic Optimal Control: The Discrete-Time Case}.
\newblock Athena Scientific, Belmont, Massachusetts, 1996.

\bibitem{Bouchard_SIAM_2009}
B.~Bouchard, R.~Elie, and N.~Touzi.
\newblock Stochastic target problems with controlled loss.
\newblock {\em SIAM Journal on Control and Optimization}, 48(5):3123--3150,
  2009.

\bibitem{Carpentier-Chancelier-Cohen-DeLara-Girardeau:2012}
P.~Carpentier, J.-P. Chancelier, G.~Cohen, M.~{De Lara}, and P.~Girardeau.
\newblock Dynamic consistency for stochastic optimal control problems.
\newblock {\em Annals of Operation Research}, 200(1):247--263, 2012.

\bibitem{Cheridito_EJP_2006}
P.~Cheridito, F.~Delbaen, and M.~Kupper.
\newblock Dynamic monetary risk measures for bounded discrete-time processes.
\newblock {\em Electronic Journal of Probability}, 11(3):57--106, 2006.

\bibitem{Detlefsen_FS_2005}
K.~Detlefsen and G.~Scandolo.
\newblock Conditional and dynamic convex risk measures.
\newblock {\em Finance and Stochastics}, 9(4):539--561, October 2005.

\bibitem{Everett_OR_1963}
H.~Everett.
\newblock Generalized {L}agrange multiplier method for solving problems of
  optimum allocation of resources.
\newblock {\em Operations research}, 11:399--417, 1963.

\bibitem{Girardeau_These_2010}
P.~Girardeau.
\newblock {\em R\'{e}solution de grands probl\`{e}mes en optimisation
  stochastique dynamique et synth\`{e}se de lois de commande}.
\newblock PhD thesis, Universit\'{e} Paris-Est, december 2010.
\newblock \url{http://www.theses.fr/2010PEST1026}.

\bibitem{Granato_These_2012}
G.~Granato.
\newblock {\em Optimisation de lois de gestion \'{e}nerg\'{e}tiques des
  v\'{e}hicules hybrides}.
\newblock PhD thesis, \'{E}cole Polytechnique, december 2012.

\bibitem{Hammond:1989}
P.~J. Hammond.
\newblock Consistent plans, consequentialism, and expected utility.
\newblock {\em Econometrica}, 57(6):1445--1449, 1989.

\bibitem{Pfeiffer_AMO_2018}
L.~Pfeiffer.
\newblock Two approaches to stochastic optimal control problems with a
  final-time expectation constraint.
\newblock {\em Appl Math Optim}, 77:377--404, 2018.

\bibitem{Pflug2016}
G.~C. Pflug and A.~Pichler.
\newblock Time-inconsistent multistage stochastic programs: Martingale bounds.
\newblock {\em European Journal of Operational Research}, 249(1):155--163,
  2016.

\bibitem{Riedel_SPA_2004}
F.~Riedel.
\newblock Dynamic coherent risk measures.
\newblock {\em Stochastic Processes and their Applications}, 112(2):185 -- 200,
  2004.

\bibitem{Ruszczynski_OO_2009}
A.~Ruszczynski.
\newblock Risk-averse dynamic programming for {M}arkov decision processes.
\newblock {\em Mathematical Programming}, 125:235--261, 2010.

\bibitem{Shapiro_ORL_2009}
A.~Shapiro.
\newblock On a time consistency concept in risk averse multistage stochastic
  programming.
\newblock {\em Operations Research Letters}, 37(3):143 -- 147, 2009.

\end{thebibliography}
\end{document}